\tikzset{
  commutative diagrams/.cd, 
  arrow style=tikz, 
  diagrams={>=stealth}
}
\theoremstyle{theorem}
\newenvironment{customthm}[1]
  {\innercustomthm}
  {\endinnercustomthm}
\def\@tocline#1#2#3#4#5#6#7{\relax
  \ifnum #1>\c@tocdepth % then omit
  \else
    \par \addpenalty\@secpenalty\addvspace{#2}%
    \begingroup \hyphenpenalty\@M
    \@ifempty{#4}{%
      \@tempdima\csname r@tocindent\number#1\endcsname\relax
    }{%
      \@tempdima#4\relax
    }%
    \parindent\z@ \leftskip#3\relax \advance\leftskip\@tempdima\relax
    \rightskip\@pnumwidth plus4em \parfillskip-\@pnumwidth
    #5\leavevmode\hskip-\@tempdima
      \ifcase #1
       \or\or \hskip 1em \or \hskip 2em \else \hskip 3em \fi%
      #6\nobreak\relax
    \dotfill\hbox to\@pnumwidth{\@tocpagenum{#7}}\par
    \nobreak
    \endgroup
  \fi}
\tikzset{
  arrow/.pic={\path[tips,every arrow/.try,->,>=#1] (0,0) -- +(0,4pt);},
  pics/arrow/.default={triangle 90}
}
\tikzset{->-/.style={decoration={
  markings,
  mark=at position .6 with {\arrow{latex}}},postaction={decorate}}
  }
\tikzset{
  c/.style={every coordinate/.try}
}
\newcounter{marginnote}
\DeclareMathAlphabet{\mathpzc}{OT1}{pzc}{m}{it}
\newcommand{\Rub}{\operatorname{Rub}}
\theoremstyle{definition}
\newtheorem{theorem}{Theorem}[section]
\newtheorem{lemma}[theorem]{Lemma}
\newtheorem{remark}[theorem]{Remark}
\newtheorem*{runningexample*}{Running example}
\newtheorem*{aside*}{Aside}
\newtheorem{definition}[theorem]{Definition}
\newtheorem{example}[theorem]{Example}
\newtheorem{proposition-definition}[theorem]{Proposition-Definition}
\DeclareMathOperator{\Hom}{Hom}
\DeclareMathOperator{\Bl}{Bl}
\DeclareMathOperator{\gp}{gp}
\newcommand{\RR}{\mathbb{R}}
\newcommand{\Ycal}{\mathcal{Y}}
\newcommand{\Zcal}{\mathcal{Z}}
\newcommand{\Exp}{\operatorname{Exp}}
\newcommand{\f}{\mathrm{f}}
\newcommand{\p}{\mathrm{p}}
\newcommand{\rtrop}{\mathrm{r}}
\newcommand{\Gm}{\mathbb{G}_{\operatorname{m}}}
\newcommand{\ol}[1]{\overline{#1}}
\newcommand{\bcd}{\begin{center}\begin{tikzcd}}
\newcommand{\ecd}{\end{tikzcd}\end{center}}
\newcommand{\Aaff}{\mathbb{A}}
\newcommand{\T}{\mathsf{T}}
\newcommand{\PP}{\mathbb{P}}
\newcommand{\OO}{\mathcal{O}}
\newcommand{\Speck}{\operatorname{Spec}\kfield}
\newcommand{\kfield}{\Bbbk}
\newcommand{\Acal}{\mathcal{A}}
\newcommand{\Xcal}{\mathcal{X}}
\newcommand{\Bcal}{\mathcal{B}}
\newcommand{\Xfrak}{\mathfrak{X}}
\newcommand{\Kup}{\mathsf{K}}
\newcommand{\acts}{\curvearrowright}
\newcommand{\edit}[1]{\color{red} {#1} \color{black}}
\NewDocumentCommand{\compatibilitydatum}{m m m m m m O{} O{} O{}}{
\begin{equation*} \begin{tikzcd}[ampersand replacement=\&]
  \: \arrow{r} \& {#1} \arrow{r} \arrow{d}{#7} \& {#2} \arrow{r} \arrow{d}{#8} \& {#3} \arrow{r}{[1]} \arrow{d}{#9} \& \: \\
  \: \arrow{r} \& {#4} \arrow{r} \& {#5} \arrow{r} \& {#6} \arrow{r} \& \:
\end{tikzcd} \end{equation*}}
\NewDocumentCommand{\commutingsquare}{m m m m o O{} O{} O{} O{}}{
\begin{equation}\begin{tikzcd}[ampersand replacement=\&] \label{#5}
  #1 \arrow{r}{#6} \arrow{d}{#7} \& #2 \arrow{d}{#8} \\
  #3 \arrow{r}{#9} \& #4
\end{tikzcd}\IfValueTF{#5}{\label{#5}}{} \end{equation}}
\NewDocumentCommand{\cartesiansquare}{m m m m O{} O{} O{} O{}}{
\begin{equation*}\begin{tikzcd}[ampersand replacement=\&]
  #1 \arrow{r}{#5} \arrow{d}{#6} \arrow[dr, phantom, "\square"] \& #2 \arrow{d}{#7} \\
  #3 \arrow{r}{#8} \& #4
\end{tikzcd} \end{equation*}}
\NewDocumentCommand{\cartesiansquarelabel}{m m m m m O{} O{} O{} O{}}{
\begin{tikzcd}[ampersand replacement=\&]
  #1 \arrow{r}{#6} \arrow{d}{#7} \arrow[dr, phantom, "\square"] \& #2 \arrow{d}{#8} \\
  #3 \arrow{r}{#9} \& #4
\end{tikzcd}\IfValueTF{#5}{\label{#5}}{}
}
\NewDocumentCommand{\triangleofspaces}{m m m O{} O{} O{}}{
\begin{tikzcd} [ampersand replacement=\&]
#1 \arrow{r}{#4} \arrow[bend right]{rr}{#5} \& #2 \arrow{r}{#6} \& #3
\end{tikzcd}}
\newcommand\blfootnote[1]{%
  \begingroup
  \renewcommand\thefootnote{}\footnote{#1}%
  \addtocounter{footnote}{-1}%
  \endgroup
}
\begin{document}
 
\title{Rubber tori in the boundary of expanded stable maps}
\author{Francesca Carocci and Navid Nabijou}

\begin{abstract} We investigate torus actions on logarithmic expansions in the context of enumerative geometry. Our main result is an intrinsic and coordinate-free description of the higher-rank rubber torus appearing in the boundary of the space of expanded stable maps. The rubber torus is constructed canonically from the tropical moduli space, and its action on each stratum of the expanded target is encoded in a linear tropical position map. The presence of $2$-morphisms in the universal target forces expanded stable maps differing by the rubber action to be identified. This provides the first step towards a recursive description of the boundary of the expanded moduli space, with future applications including localisation and rubber calculus.\end{abstract}
\vspace{-1cm}

\maketitle
\tableofcontents
\vspace{-1cm}

\section*{Introduction}

\noindent \blfootnote{MSC 2020: 14N35, 14H10, 14D06, 14D23 (Primary).} The development of logarithmic Gromov--Witten theory over the past decade has seen it grow to occupy a key position within contemporary enumerative geometry \cite{GrossSiebertLog,ChenLog,AbramovichChenLog,KimLog,RangExpansions}. This growth has been aided crucially by a corpus of techniques unique to the logarithmic theory, chief among them correspondence theorems involving tropical curve counts and scattering diagrams \cite{MikhalkinTropicalCorrespondence,NishinouSiebert,MandelRuddat,GPS,ArguzGross,Graefnitz,BousseauTakahashi,BBvG2}.

On the other hand, the powerful techniques of classical Gromov--Witten theory, such as torus localisation and rubber calculus, have yet to be successfully implemented in the logarithmic setting. This is because of one fundamental obstacle: the recursive structure of the boundary of the moduli space of logarithmic stable maps is intricate, and still not fully understood (though recent progress has been made in the unexpanded setting \cite[Theorem~5.6]{PuncturedMaps}, see the discussion of punctured maps below).

We provide the first step towards such a recursive description in the setting of expanded stable maps. We give an intrinsic tropical characterisation of the generalised rubber torus and its action (\S \ref{sec: rubber action}), together with a careful account of its relationship to the $2$-morphisms in the universal expanded target (\S \ref{sec: 2-morphisms} and \ref{sec: boundary of moduli}).

The rubber formalism is useful for relating logarithmic and absolute invariants. For instance, it is natural to conjecture that the logarithmic Gromov--Witten theory of a normal crossings pair is determined by the absolute Gromov--Witten theories of all its strata, generalising \cite[Theorem~2]{MaulikPandharipande}. Rubber calculations, combined with degeneration techniques, provide an avenue for approaching this question. 

\subsection*{Rubber tori and tropical position maps} In the expanded formalism \cite{RangExpansions,MR20} boundary strata parametrise transverse stable maps to a fixed target expansion. For smooth pairs, it is known that two such maps should be identified if they differ by the action of the rubber tori on the higher levels of the expanded target \cite{Li1}. Until now, the question of what the corresponding identification should be for normal crossings divisors has not been explored in detail. We answer this question, and hence provide the first detailed examination of the boundary of the  moduli space. The answer we obtain is satisfyingly tropical. 

Take a normal crossings pair $(X|D)$. A logarithmic target expansion corresponds to a combinatorial type of polyhedral subdivision of the tropicalisation $\Sigma=\Sigma(X|D)$. The irreducible components $Y_v$ of the expanded target are indexed by the vertices $v$ of this polyhedral subdivision.

Given a combinatorial type of polyhedral subdivision there is an associated tropical moduli space $\tau$ parametrising its edge lengths. Each vertex $v$ is contained in a unique minimal cone $\sigma_v \leq \Sigma$, and there is a linear map of cones called the \textbf{tropical position map}
\[ \varphi_v \colon \tau \to \sigma_v \]
which records the position of the vertex $v$ in terms of the tropical edge lengths. Our main result can be summarised as follows (full details are given in \S \ref{sec: rubber action}).

\begin{customthm}{A}[\emph{Theorem~\ref{prop: rubber action general}} \emph{for busy people}] The rubber torus is the torus associated to the tropical moduli space:
\[ T_\tau = \tau \otimes \Gm. \]
The action of the rubber torus on the irreducible component $Y_v$ of the expanded target is given by composing the homomorphism of tori associated to the tropical position map
\[ \varphi_v \otimes \Gm \colon T_\tau \to T_{\sigma_v}\]
with the natural action $T_{\sigma_v} \acts Y_v$.
\end{customthm}
Besides its importance for enumerative geometry, this result gives the first general characterisation of automorphisms of logarithmic expansions.

\subsection*{Example} Let $(X|D)$ be a normal crossings pair with $D=D_1+D_2$ consisting of two smooth components with connected nonempty intersection. Consider the following polyhedral subdivision of the tropicalisation:

\begin{center}
\begin{tikzpicture}[scale=0.6]
%coordinate polyhedral subdivision
\coordinate (OP) at (0,0,0);
\coordinate (BP) at (7,0,0);
   \coordinate (CP) at (0,7,0);
 \coordinate (v1P) at (0,5,0);
      \coordinate (v2P) at (3,3,0);
            \coordinate (v3P) at (3,5,0);
      \coordinate (vl2) at (7,3,0);
      \coordinate (vl31) at (7,5,0);
    \coordinate (vl32) at (3,7,0);
    
\begin{scope}[every coordinate/.style={shift={(11,0,0)}}]

%edge lengths

\draw [decorate,decoration={brace,amplitude=5pt,mirror},xshift=0.4pt,yshift=0.4pt] ([c]OP)--([c]v2P) node[black,midway,xshift=0.3cm,yshift=-0.3cm] {\small{$e_1$}};
\draw [decorate,decoration={brace,amplitude=5pt,mirror},xshift=0.4pt,yshift=-0.4pt] ([c]v2P)--([c]v3P) node[black,midway,xshift=0.4cm,yshift=0 cm] {\small{$e_2$}};
\draw [decorate,decoration={brace,amplitude=5pt,mirror},xshift=0.4pt,yshift=0.4pt] ([c]v3P)--([c]v1P) node[black,midway,xshift=0cm,yshift=.35cm] {\small{$e_1$}};
\draw [decorate,decoration={brace,amplitude=5pt,mirror},xshift=0.4pt,yshift=-0.4pt] ([c]v1P)--([c]OP) node[black,midway,xshift=-0.8cm,yshift=0.05 cm] {\small{$e_1+e_2$}};

\draw[->] ([c]OP)--([c]BP);
\draw [->]([c]OP)--([c]CP);
\draw ([c]OP)--([c]v2P);
\draw ([c]v2P)--([c]v3P);
\draw ([c]v1P)--([c]v3P);
\draw [->]([c]v2P)--([c]vl2);
\draw [->]([c]v3P)--([c]vl31);
\draw [->]([c]v3P)--([c]vl32);

   \fill[blue]([c]OP) circle (3pt);
       \fill[blue] ([c]v1P) circle (3pt);
       \fill[blue] ([c]v2P) circle (3pt);
          \fill[blue] ([c]v3P) circle (3pt);
       
       \node at ([c]OP) [below] {\small{$v_0$}};	
     \node at ([c]v1P) [left] {\small{$v_1$}};
     \node at ([c]v2P) [xshift=.2cm, yshift=-0.2cm] {\small{$v_2$}};
          \node at ([c]v3P) [xshift=.2cm, yshift=0.2cm] {\small{$v_3$}};

      \node at ([c]BP) [right] {\small{$D_1$}};
          \node at ([c]CP) [above] {\small{$D_2$}};

\end{scope}
\end{tikzpicture}
\end{center}
This gives rise to a target expansion consisting of four irreducible components, indexed by the vertices: $Y_{v_0}$ is the blowup of $X$ at $D_1 \cap D_2$; $Y_{v_1}$ is a $\PP^1$ bundle over $D_2$; $Y_{v_2}$ and $Y_{v_3}$ are respectively $\PP^2$ and $\PP^1 \times \PP^1$ bundles over $D_1 \cap D_2$. The minimal cones of $\Sigma$ containing the vertices $v_i$ are
\[ \sigma_{v_0}=0, \quad \sigma_{v_1} = \RR_{\geq 0} \langle D_2 \rangle, \quad \sigma_{v_2} = \RR^2_{\geq 0} \langle D_1,D_2 \rangle, \quad \sigma_{v_3} = \RR^2_{\geq 0} \langle D_1, D_2 \rangle, \]
and each $T_{\sigma_{v_i}}=\sigma_{v_i} \otimes \Gm$ acts on the toric variety bundle $Y_{v_i}$ as the dense torus of the fibres. The tropical moduli space associated to the polyhedral subdivision is
\[ \tau=\RR^2_{\geq 0}\langle e_1,e_2 \rangle \]
and the tropical position maps $\varphi_{v_i} \colon \tau \to \sigma_{v_i}$ are given as follows:
\begin{align*} \varphi_{v_0}(e_1,e_2) & = 0, \\
\varphi_{v_1}(e_1,e_2) & = e_1 + e_2,  \\
\varphi_{v_2}(e_1,e_2) & = (e_1,e_1), \\
\varphi_{v_3}(e_1,e_2) & = (e_1,e_1+e_2). \end{align*}
The rubber torus is $T_\tau=\Gm^2$ and each tropical position map induces a homomorphism of tori
\[ \varphi_{v_i} \otimes \Gm \colon T_\tau \to T_{\sigma_{v_i}} \]
which together with the action of $T_{\sigma_{v_i}}$ on $Y_{v_i}$ determines the action of the rubber torus on $Y_{v_i}$. For example, the action on $Y_{v_2}$ has weights $(\lambda_1,\lambda_1)$ in the affine coordinates of the fibres of the $\PP^2$ bundle.

Many more examples are presented in \S \ref{sec: examples}.

\subsection*{Moving the join divisors} The example above illustrates an important new phenomenon which appears as soon as we move beyond the case of smooth pairs. Rather than every component of the expanded target carrying its own associated torus, there is instead a \emph{single} rubber torus $T_\tau$ which acts on the entire expanded target simultaneously. This torus cannot, in general, be split into tori acting on the individual components, because of the following basic fact:
\begin{center}
\fbox{\parbox{0.8\textwidth}{\center{The rubber torus can act nontrivially on a divisor joining two components of the expanded target.}}}
\end{center}
\noindent In the previous example, the rubber torus $T_\tau$ acts nontrivially on both $Y_{v_1} \cap Y_{v_3}$ and $Y_{v_2} \cap Y_{v_3}$.

This fact reflects the rich geometry of higher-rank target expansions, and complicates the search for a simple recursive description of the boundary. We discuss these issues, and possible solutions, in \S \ref{sec: obstacles for recursive description}.

\subsection*{$2$-morphisms} Having defined the rubber torus and its action (\S \ref{sec: rubber action}), we next give a conceptual explanation as to why it appears in the boundary of the moduli space of expanded stable maps (\S \ref{sec: 2-morphisms} and \ref{sec: boundary of moduli}). The key point is that the universal expanded target
\[ \Xfrak \to \Exp \]
is a morphism between two Artin stacks \cite{MR20}. As such, the $2$-morphisms in $\Xfrak$ and $\Exp$ must be incorporated when formulating the notion of isomorphism between expanded stable maps. Once this is done correctly, the rubber action arises automatically.

The upshot is Theorem~\ref{thm: description of locally closed stratum}, which describes the locally-closed strata of the moduli space in terms of stable maps to a fixed target expansion, identified up to the rubber action. In \S \ref{sec: rubber spaces}, we use the same perspective to define expanded stable maps to higher-rank rubber targets.

\subsection*{Future directions} As already noted, our results provide the first steps towards a recursive description of the boundary of the moduli space of expanded stable maps. The remaining obstacles are discussed in \S \ref{sec: obstacles for recursive description}. It seems likely that a naive splitting formalism, into moduli spaces associated to each irreducible component of the target, does not exist. Instead, we speculate that a combination of coarser splitting statements (e.g. into moduli spaces associated to each maximal cone of the unexpanded tropical target) and rigidification techniques will be required to achieve calculations.

We focus on logarithmic Gromov--Witten theory, but our arguments apply \emph{mutatis mutandis} to the boundary of the logarithmic Donaldson--Thomas moduli space \cite{MR20}, see Remark~\ref{rmk: can also do DT}.

In \cite{CarocciNabijou2} we describe the global geometry of the irreducible components $Y_v$ of a logarithmic expansion. This turns out to be more subtle than it initially appears.

\subsection*{Punctured maps} The theory of punctured stable maps \cite{PuncturedMaps} provides an alternative avenue for exploring recursive descriptions of the boundary in the unexpanded setting. In our experience the unexpanded and expanded theories complement each other well, with their differences usually making one or the other better suited to a particular purpose. It would be worthwhile to compare and contrast the two recursive formalisms.

\subsection*{Acknowledgements} We thank Dhruv~Ranganathan for numerous helpful discussions. We also benefited from stimulating conversations with Luca~Battistella and Patrick~Kennedy-Hunt. We thank the anonymous referee for several valuable suggestions.

\subsection*{Funding} F.C. is supported by the EPFL Chair of Arithmetic Geometry (ARG). N.N. is supported by the Herchel Smith Fund. 

\subsection*{Notation and conventions} We work over an algebraically closed field $\kfield$ of characteristic zero, and assume familiarity with the basics of logarithmic and toroidal geometry.

A cone is by definition a pair $(\sigma, N_\sigma)$ where $N_\sigma$ is a lattice of finite rank and $\sigma \subseteq N_{\sigma}\otimes \RR$ is a convex rational polyhedral cone. We do not require $\sigma$ to be strictly convex. Morphisms of cones must respect the integral structure. A face morphism is an injective morphism of cones whose image is a face of the codomain. A diagram of cones and face morphisms is admissible if it contains all identity morphisms, and if every face of every cone belongs to the image of a face morphism. By a \textbf{cone stack} we mean the colimit of an admissible diagram. A cone stack is a \textbf{cone space} if the diagram contains at most one isomorphism between two distinct cones, and no automorphisms of a given cone besides the identity. A cone space is a \textbf{cone complex} if there is at most one morphism between any pair of cones \cite[\S 2.2]{CavalieriChanUlirschWise}.

We collect for the reader's convenience the principal characters of the story. These are described in detail in the text.

\tabulinesep=3pt
\begin{longtabu}{p{0.1\textwidth}  p{0.8\textwidth}}
$\tau$ & rational polyhedral cone; later a cone in the tropical moduli of expansions\\
$N_\tau$ & lattice associated to $\tau$\\
$Q_\tau$ & dual monoid $Q_\tau = \tau^\vee \cap N_\tau^\vee$\\
$U_\tau$ & affine toric variety $U_\tau = \Speck[Q_\tau]$\\
$T_\tau$ & dense torus in $U_\tau$, i.e. the rubber torus: $T_\tau = \Speck[Q_\tau^{\gp}]= \tau \otimes \Gm$\\
$\Acal_\tau$ & Artin cone $\Acal_\tau = [U_\tau/T_\tau]$\\
$(X|D)$ & normal crossings pair \\
$\Sigma$ & tropicalisation of $(X|D)$\\
$\Upsilon$ & tropical expansion of $\Sigma$ over $\tau$; conical subdivision of $\Sigma \times \tau$ \\
$\Xcal_{\Upsilon}$ & logarithmic expansion of $X$ over $U_\tau$; logarithmic modification of $X \times U_\tau$\\
$Y_\Upsilon$ & central fibre $\Xcal_{\Upsilon}|_0$ over $0 \in U_\tau$\\
$Y_v$ & component of $Y_\Upsilon$ indexed by vertex $v$ of associated polyhedral subdivision.\\
$\T$ & tropical moduli space of target expansions, constructed in \cite{MR20}\\
$\Exp$ & moduli stack of target expansions; Artin fan associated to $\T$\\
$\Xcal_{\tau}$ & versal target over $U_\tau$ \\
%, i.e. $\Xcal_{\Upsilon}$ for $\Upsilon$ the choice of expansion giving $\Exp$\\
$\Xfrak_\tau$ & universal target over $\Acal_\tau$, quotient of versal target by the rubber action\\
$\Xfrak$ & universal target over $\Exp$, obtained by gluing the $\Xfrak_\tau$
\end{longtabu}

\section{Rubber actions on logarithmic expansions} \label{sec: rubber action}
\noindent We begin with a direct and coordinate-free description of the rubber torus, and its action on the expanded target, in terms of tropical moduli and position maps. The key points are Definition~\ref{def: position map} and Theorem~\ref{prop: rubber action general}. The starting data is:
\begin{enumerate}
\item $(X|D)$ a normal crossings pair;
\item $\tau$ a rational polyhedral cone, of maximal dimension in $N_\tau$.
\end{enumerate}
We will study families of logarithmic expansions of $(X|D)$ parametrised by $\tau$. In the setting of expanded stable maps, $\tau \leq \T$ will be a cone in a tropical moduli space of target expansions.

While working through this section, the reader may find it helpful to refer to the examples presented in \S \ref{sec: examples}.

\subsection{Subdivision preliminaries} \label{sec: subdivisions} Cone complexes, and subdivisions thereof, play a central role in the theory of expansions. We recall the following definition, which deviates from standard usage.
\begin{definition}[\hspace{0.1pt}{\cite[Definition 1.3.1]{MR20}}] \label{def: subdivision} Let $\Delta$ be a cone complex. A (conical) \textbf{subdivision} of $\Delta$ is a morphism of cone complexes
\begin{equation*} \Delta^\dag \to \Delta \end{equation*}
which is injective on supports, and such that the image of the set of integral points of every cone in $\Delta^\dag$ is a saturated submonoid of the set of integral points of a cone in $\Delta$.\end{definition}
This definition allows for morphisms $\Delta^\dag \to \Delta$ which are injective but not surjective on supports. This additional flexibility is useful for applications in enumerative geometry. Subdivisions which are bijective on supports, i.e. which are subdivisions in the usual sense, will be referred to as \textbf{complete}. This special class enjoys no real advantages: as we shall see, complete and non-complete subdivisions can be handled uniformly.

Toroidal semistable reduction \cite{AbramovichKaru} can be used to factor every subdivision as:
\begin{equation*} \Delta^\dag \to \Delta^\ddag \to \Delta \end{equation*}
where $\Delta^\ddag \to \Delta$ is a complete subdivision and $\Delta^\dag \to \Delta^\ddag$ is the inclusion of a subcomplex. There is typically no preferred or minimal choice for $\Delta^\ddag \to \Delta$.

 Suppose that $\Delta$ is the tropicalisation of a logarithmic scheme $X$ \cite{AbramovichEtAlSkeletons}. Then a subdivision $\Delta^\dag \to \Delta$ induces a morphism of logarithmic schemes
 \begin{equation*} X^\dag \to X \end{equation*}
 which is logarithmically \'etale, but is not proper or surjective unless the subdivision is complete. We will again deviate from standard terminology, and refer to such morphisms as \textbf{logarithmic modifications}. By the previous paragraph, $X^\dag$ may be realised (non-uniquely) as an open subset of a logarithmic modification in the usual sense.

\subsection{Tropical expansions} \label{sec: tropical expansions general case} Let $(X|D)$ be a normal crossings pair with tropicalisation $\Sigma=\Sigma(X|D)$. We consider tropical expansions of $\Sigma$ parametrised by an arbitrary base cone $\tau$. These have been studied by many authors, in various guises and levels of generality \cite{KKMS,MumfordAbelian,NishinouSiebert,MandelRuddat,FosterRanganathan,MR20}.
\begin{definition}\label{def: tropical expansion general} A \textbf{tropical expansion} of $\Sigma$ over $\tau$ consists of a conical subdivision of cone complexes
\begin{equation*} \Upsilon = (\Sigma \times \tau)^\dag \to \Sigma \times \tau \end{equation*}
such that for every cone $\omega \leq \Upsilon$ the projection $\p \colon \Upsilon \to \tau$:
\begin{itemize}
\item maps $\omega$ surjectively onto a face of $\tau$ ($\p$ is \emph{integral} or \emph{combinatorially flat});
\item maps $N_{\omega}$ onto the lattice of its image face ($\p$ is \emph{saturated} or \emph{combinatorially reduced}).
\end{itemize}
\end{definition}
\noindent Following Definition~\ref{def: subdivision}, the inclusion of supports $|\Upsilon| \subseteq |\Sigma \times \tau|$ is injective but not necessarily surjective. Tropical expansions with $|\Upsilon|=|\Sigma \times \tau|$ are referred to as \textbf{complete}. The following discussion applies uniformly to complete and non-complete expansions.

Fix a tropical expansion $\p \colon \Upsilon \to \tau$. For each point $\f \in |\tau|$ the fibre
\begin{equation*} \p^{-1}(\f) \end{equation*}
is a polyhedral complex. It is a polyhedral subdivision of $\Sigma$ in the generalised sense, i.e. there is a morphism of polyhedral complexes $\p^{-1}(\f) \to \Sigma$ which is injective on supports. %If $\Upsilon$ is complete then it is a polyhedral subdivision in the usual sense.

We view $\p$ as a family of polyhedral subdivisions of $\Sigma$, parametrised by $\f \in |\tau|$. The \textbf{combinatorial type} of $\p^{-1}(\f)$ is the data which associates to each polyhedron $P \leq \p^{-1}(\f)$ the minimal cone $\sigma_P \leq \Sigma$ containing it, and associates to each oriented edge $E \leq \p^{-1}(\f)$ its integral slope $m_E \in N_{\sigma_E}$. This data is constant over the relative interior of each face of $\tau$. 

The finite edges of $\p^{-1}(\f)$ are metrised by the choice of point in the base. Specialising to a smaller face of $\tau$ has the effect of setting some of these edge lengths to zero, collapsing the polyhedral subdivision to a simpler combinatorial type. In the limit $\f=0$ we obtain a cone complex $\Sigma^\dag=\p^{-1}(0)$ called the \textbf{asymptotic cone complex}. It is a conical subdivision of $\Sigma$. 

Let $U_\tau$ be the affine toric variety associated to $\tau$. The product $\Sigma\times \tau$ is the tropicalisation of the product $X \times U_\tau$ and so the subdivision $\Upsilon$ produces a logarithmic modification:
\begin{equation*} \Xcal_{\Upsilon} \to X \times U_\tau.\end{equation*}
 This morphism is logarithmically \'etale, and the second projection of $X \times U_\tau$ is logarithmically smooth. Therefore the composite
 \begin{equation*} \pi \colon \Xcal_{\Upsilon} \to U_\tau \end{equation*}
is logarithmically smooth. Since its tropicalisation $\p$ is integral and saturated, it follows that $\pi$ is flat with reduced fibres \cite[Lemmas 4.1 and 5.2]{AbramovichKaru}. We refer to this as a \textbf{logarithmic expansion} of $X$ parametrised by $U_\tau$. The first projection
\begin{equation*} \rho \colon \Xcal_{\Upsilon} \to X \end{equation*}
collapses each fibre of $\pi$ onto the unexpanded target $X$. 

The general fibre of $\pi$ is the logarithmic modification $X^\dag \to X$ associated to the asymptotic cone complex $\Sigma^\dag \to \Sigma$. The central fibre over the torus-fixed point $0 \in U_\tau$ is denoted:
\begin{equation*} Y_\Upsilon = \Xcal_{\Upsilon}|_0.\end{equation*}
The irreducible components of $Y_\Upsilon$ are indexed by the vertices of the polyhedral subdivision $\p^{-1}(\f)\to\Sigma$, for $\f \in |\tau|$ any interior point. Let $V$ denote the set of such vertices and for $v \in V$ let $Y_v$ denote the corresponding component, so that:
\begin{equation*} Y_\Upsilon = \bigcup_{v \in V} Y_v.\end{equation*}
For $v \in V$ let $\sigma_v \leq \Sigma$ be the minimal cone containing the vertex $v$ and let $X_v = X_{\sigma_v} \subseteq X$ denote the corresponding stratum. The fibrewise collapsing morphism $\rho \colon \Xcal_{\Upsilon} \to X$ restricts to a morphism $Y_v \to X_v$.

In certain situations $Y_v \to X_v$ is a toric variety bundle. In general, however, this is not the case. The morphism can fail to be flat, or can be flat but with reducible fibres. However, if we restrict to the interiors of $Y_v$ and $X_v$ we obtain a principal torus bundle:
\begin{equation*} Y_v^\circ \to X_v^\circ. \end{equation*}
We will describe the rubber action on $Y_v^\circ$ and, more generally, on every locally-closed stratum of $Y_\Upsilon$. This is sufficient for applications to enumerative geometry. The global geometry of the irreducible components $Y_v$, including a combinatorial recipe to construct $Y_v$ from the base $X_v$, is treated in the companion paper \cite{CarocciNabijou2}.

\subsection{Rubber action (toric case)} \label{sec: rubber action toric case} We start with the toric case, which already contains all the essential ideas. Assume that $X$ is a toric variety with $D=\partial X$ the toric boundary. The tropicalisation $\Sigma=\Sigma_X$ is simply the fan of $X$.

The dense torus $T_\Sigma \subseteq X$ will not exist in the general setting of toroidal embeddings (Section~\ref{sec: rubber action general case}), and plays only an auxiliary role in the forthcoming discussion. The dense torus $T_\tau$, on the other hand, always exists and is of fundamental importance:
\begin{definition}\label{def: rubber torus} The \textbf{rubber torus} is the dense torus of the base:
\begin{equation} T_\tau = \tau \otimes \Gm \subseteq U_\tau.\end{equation}
The \textbf{rubber action} is the natural action
\begin{equation*} T_\tau \acts \Xcal_{\Upsilon} \end{equation*} 
induced by the splitting $T_\Upsilon = T_\Sigma \times T_\tau$. The morphisms $\pi$ and $\rho$ are equivariant with respect to this action. In particular, the rubber action restricts to an action on the central fibre:
\begin{equation} T_\tau \acts Y_\Upsilon. \end{equation}
\end{definition}
We now give a tropical description of the rubber action on each component $Y_v$ of $Y_\Upsilon$. The key idea is contained in the following definition.
\begin{definition}\label{def: position map}
For each $v \in V$ let $\sigma_v \leq \Sigma$ denote the unique minimal cone containing $v$. The \textbf{tropical position map} is the linear map
\begin{equation} \varphi_v \colon \tau \to \sigma_v \end{equation}
which for each $\f \in |\tau|$ records the position $\varphi_v(\f) \in \sigma_v$ of the vertex $v$ of the polyhedral complex $\p^{-1}(\f)$. Formally: the vertex $v$ corresponds to a cone $\omega_v \leq \Upsilon$ which is mapped isomorphically onto $\tau$ via $\p$, and the tropical position map is defined as
\begin{equation}\label{eqn: formula for position map} \varphi_v(\f) = \rtrop(\omega_v \cap \p^{-1}(\f)) \in \sigma_v \end{equation}
for all $\f \in |\tau|$, where $\rtrop \colon \omega_v \to \sigma_v$ is the tropicalisation of $\rho$.
\end{definition}

\begin{theorem}\label{prop: rubber action toric}
Let $T_v \subseteq Y_v$ be the dense torus. Then there is a natural inclusion $T_{\sigma_v} \hookrightarrow T_v$ and the rubber action $T_\tau \acts Y_v$ is determined by the tropical position map, as the composite:
\begin{equation} T_\tau \xrightarrow{\varphi_v \otimes \Gm} T_{\sigma_v} \hookrightarrow T_v.\end{equation} \end{theorem}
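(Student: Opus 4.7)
The strategy is to unpack the toric geometry of $\Xcal_\Upsilon$ and reduce the statement to a direct computation on cocharacter lattices, via the orbit--cone correspondence. Since both $X$ and $U_\tau$ are toric, the product $X \times U_\tau$ is a toric variety with fan $\Sigma \times \tau$ and dense torus $T_\Upsilon = T_\Sigma \times T_\tau$, and the logarithmic modification $\Xcal_\Upsilon$ is the toric variety associated to the subdivision $\Upsilon$, carrying the same torus action. In this setting the rubber action on $\Xcal_\Upsilon$ is simply the action of the subtorus $T_\tau \subset T_\Upsilon$ obtained from the splitting.

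First I would identify each component $Y_v$ as a closed toric orbit. The projection $\pi : \Xcal_\Upsilon \to U_\tau$ is the toric morphism associated to $\p : \Upsilon \to \tau$, so its fibre over $0 \in U_\tau$ is the union of closed orbit closures $\overline{O_\omega}$ indexed by cones $\omega \in \Upsilon$ mapping onto $\tau$. Under the integrality and saturation hypotheses on $\p$, the irreducible components of the central fibre correspond exactly to those cones $\omega_v$ mapped isomorphically onto $\tau$ by $\p$, hence $Y_v = \overline{O_{\omega_v}}$ with dense torus $T_v = T_\Upsilon/T_{\omega_v} = (N_\Upsilon/N_{\omega_v}) \otimes \Gm$. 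Consequently the rubber action $T_\tau \acts Y_v$ is induced at the cocharacter level by the composite $N_\tau \hookrightarrow N_\Upsilon \twoheadrightarrow N_\Upsilon/N_{\omega_v}$, sending $n \mapsto (0, n) \bmod N_{\omega_v}$.

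The key computational input is that $N_{\omega_v} \subset N_\Sigma \oplus N_\tau = N_\Upsilon$ is exactly the graph of $\varphi_v$ on cocharacter lattices. Indeed, by Definition~\ref{def: position map} the isomorphism $\p|_{\omega_v} : \omega_v \xrightarrow{\cong} \tau$ has inverse $\f \mapsto (\varphi_v(\f), \f)$, since $\rtrop$ is the first projection. Therefore $N_{\omega_v} = \{(\varphi_v(n), n) : n \in N_\tau\}$. Both halves of the theorem now follow. The composite $N_{\sigma_v} \hookrightarrow N_\Sigma \hookrightarrow N_\Upsilon \twoheadrightarrow N_\Upsilon/N_{\omega_v}$ has trivial kernel, since an element $(n,0) \in N_{\omega_v}$ forces $n=0$; this produces the natural inclusion $T_{\sigma_v} \hookrightarrow T_v$. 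Working modulo $N_{\omega_v}$, the element $(0,n)$ is congruent to $\pm(\varphi_v(n), 0)$, which identifies the rubber action with the claimed composition $T_\tau \xrightarrow{\varphi_v \otimes \Gm} T_{\sigma_v} \hookrightarrow T_v$ after fixing the sign convention for the torus action on orbit closures.

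The main obstacle is not technical but conceptual: one must recognise that Definition~\ref{def: position map} was crafted precisely so that $N_{\omega_v}$ becomes the graph of $\varphi_v$, after which both assertions drop out of the orbit--cone dictionary. The more substantial extension to the normal crossings Theorem~\ref{prop: rubber action general}, which follows in the paper, will be harder because $X$ carries no global torus and one must work \'etale-locally on $X$ or exploit toroidal charts to reduce to the toric model established here.
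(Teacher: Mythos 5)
Your proposal is correct and follows essentially the same route as the paper: both identify $Y_v$ as the orbit closure of the cone $\omega_v$, observe that $N_{\omega_v}\subseteq N_\Sigma\times N_\tau$ is the graph of $\varphi_v$ (the paper packages this as the statement that $\omega_v$ is the image of the section $\varphi_v\times\operatorname{Id}_\tau$ of $\p|_{\omega_v}$, and then exhibits the quotient map $(\mathrm{g},\mathrm{h})\mapsto\varphi_v(\mathrm{h})-\mathrm{g}$ with kernel $N_{\omega_v}$), and then read off both the injection $N_{\sigma_v}\hookrightarrow N_v$ and the factorisation of the rubber action from the resulting identification $(N_{\sigma_v}\times N_\tau)/N_{\omega_v}\cong N_{\sigma_v}$. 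The sign ambiguity you flag is resolved in the paper by the explicit choice of that quotient map, which sends $(0,n)$ to $\varphi_v(n)$ exactly as required.
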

\begin{proof} The component $Y_v$ is a toric stratum in $\Xcal_{\Upsilon}$. The corresponding cone $\omega_v$ is given set-theoretically by:
\begin{equation*} \omega_v = \{ (\varphi_v(\f),\f) \colon \f \in \tau \} \subseteq |\sigma_v \times \tau| \subseteq |\Sigma \times \tau|.\end{equation*}
This is simply the image of the section $\varphi_v \times \operatorname{Id}_\tau$ of $\p|_{\omega_v}$ discussed in Definition~\ref{def: position map}. Recall that for any cone or fan $\rho$, we let $N_\rho$ denote the associated lattice. The dense torus $T_v \subseteq Y_v$ has cocharacter lattice
\begin{equation*} N_v = N_\Upsilon / N_{\omega_v} = (N_{\Sigma} \times N_\tau)/N_{\omega_v}\end{equation*}
and the action of the rubber torus is given by the composition:
\begin{equation}\label{eqn: rubber action 1} N_{\tau} \hookrightarrow N_{\Sigma} \times N_\tau \rightarrow N_{v}.\end{equation}
On the other hand, the morphism
\begin{align*}
 \psi \colon N_{\sigma_v} \times N_\tau & \to N_{\sigma_v}\\
(\mathrm{g},\mathrm{h}) & \mapsto \varphi_v(\mathrm{h}) - \mathrm{g}	
\end{align*}
is surjective with kernel $N_{\omega_v}$ and hence gives rise to an identification:
\begin{equation*} (N_{\sigma_v} \times N_\tau )/N_{\omega_v} = N_{\sigma_v}.\end{equation*}
Since $N_{\sigma_v} \subseteq N_\Sigma$ this identification provides a natural inclusion $\iota \colon N_{\sigma_v} \hookrightarrow N_v$. This allows us to factor the composite \eqref{eqn: rubber action 1} as
\begin{equation*} N_{\tau} \overset{\jmath}{\rightarrow} N_{\sigma_v} \times N_\tau \overset{\psi}{\rightarrow} N_{\sigma_v} \overset{\iota}{\rightarrow} N_v \end{equation*}
and since $\psi\circ \jmath = \varphi_v$ we arrive at the desired description. \end{proof}

\subsection{Rubber action on locally-closed strata (toric case)} \label{sec: locally closed strata toric case} The above description specialises to a description of the rubber action on each locally closed stratum of the central fibre $Y_{\Upsilon}$. This will be useful for describing the rubber action in the general case, so we provide the details here.

The locally closed strata of the central fibre $Y_\Upsilon \subseteq \Xcal_{\Upsilon}$ are indexed by cones $\omega \leq \Upsilon$ such that $\p(\omega)=\tau$. The poset of such cones is isomorphic to the poset of polyhedra in the polyhedral complex $\p^{-1}(\mathrm{f})$, where $\mathrm{f} \in |\tau|$ is any interior point.

Consider such a polyhedron $P$ with associated cone $\omega_P \leq \Upsilon$ and let $Y_P^\circ \subseteq Y_\Upsilon$ denote the corresponding locally-closed stratum. Let $\sigma_P \leq \Sigma$ be the unique minimal cone containing the polyhedron $P$ and let $X_P^\circ \subseteq X$ denote the corresponding locally-closed stratum.  By definition, $\sigma_P$ is the minimal cone through which the map $\omega_P \to \Sigma$ factors, and so $\rho \colon \Xcal_{\Upsilon} \to X$ restricts to a morphism:
\begin{equation*} Y_P^\circ \to X_P^\circ.\end{equation*}
This is a principal torus bundle. We will now describe its structure group, and use this to describe the rubber torus action on its fibres.

Since $\Upsilon \to \Sigma \times \tau$ restricts to $\omega_P \to \sigma_P \times \tau$ there is an inclusion $N_{\omega_P} \hookrightarrow N_{\sigma_P} \times N_\tau$. The morphism $N_{\omega_P} \to N_\tau$ is surjective since $\p \colon \Upsilon \to \tau$ is integral and saturated. We let $K_P$ denote the kernel:
\begin{equation*} K_P = \ker(N_{\omega_P} \to N_\tau).\end{equation*}
 There is a natural inclusion $K_P \hookrightarrow N_{\sigma_P}$ and we denote the quotient by
\begin{equation*} N_{\theta_P} \colonequals N_{\sigma_P}/K_P \end{equation*}
with associated cone:
\begin{equation*} \theta_P \colonequals \operatorname{im}(\sigma_P) \subseteq N_{\theta_P} \otimes \RR.\end{equation*}
We note that $\theta_P$ may not be strictly convex. In polyhedral terms, $K_P \subseteq N_{\sigma_P}$ is the lattice spanned by directions in $P$; working locally around a generic point of $P$ and declaring this point to be the origin, the inclusion $P \subseteq \sigma_P$ is a conical subset, with $K_P\subseteq N_{\sigma_P}$ the associated lattice. Note that:
\begin{equation*} \dim \theta_P = \dim \sigma_P - \dim P.\end{equation*}
We also have:
\begin{equation*} (N_{\sigma_P}\times N_\tau)/N_{\omega_P} = N_{\sigma_P}/K_P=N_{\theta_P}.\end{equation*}

\begin{theorem} \label{prop: rubber action toric locally closed} The morphism 
\begin{equation}\label{eqn: toric principal bundle} Y_P^\circ \to X_{P}^\circ \end{equation}
is a principal torus bundle, with structure group $T_{\theta_P}=\theta_P \otimes \Gm$. There is a well-defined linear tropical position map
\begin{equation*} \varphi_P \colon \tau \to \theta_P \end{equation*}
such that the rubber action on the fibres of \eqref{eqn: toric principal bundle} is given by $\varphi_P \otimes \Gm$. 
\end{theorem}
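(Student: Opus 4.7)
The plan is to follow the template of Theorem~\ref{prop: rubber action toric}, with the lattice $N_{\sigma_v}$ replaced throughout by $N_{\theta_P} = N_{\sigma_P}/K_P$. The role of $K_P$ is to absorb the positive-dimensional polyhedron $P$: quotienting $N_{\sigma_P}$ by $K_P$ collapses all parallel translates of $P$ to single points in $\theta_P$, thereby restoring the section-like behaviour exploited in the vertex case.

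First, I would identify the bundle structure torically. The stratum $Y_P^\circ$ is the torus-orbit of $\omega_P$ in $\Xcal_{\Upsilon}$ and hence has cocharacter lattice $(N_\Sigma \times N_\tau)/N_{\omega_P}$; similarly $X_P^\circ$ has cocharacter lattice $N_\Sigma/N_{\sigma_P}$, and the restriction of $\rho$ is the surjective toric homomorphism induced by the first projection. Since its kernel on cocharacter lattices is $(N_{\sigma_P} \times N_\tau)/N_{\omega_P}$, the morphism $Y_P^\circ \to X_P^\circ$ is a principal torus bundle with fibre the torus of cocharacter lattice $(N_{\sigma_P} \times N_\tau)/N_{\omega_P}$.

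Next, I would construct $\varphi_P$ intrinsically, with no choice of splitting. The composite
\[ N_{\omega_P} \hookrightarrow N_{\sigma_P} \times N_\tau \twoheadrightarrow N_{\sigma_P} \twoheadrightarrow N_{\theta_P} \]
sends $K_P = N_{\omega_P} \cap (N_{\sigma_P} \times 0)$ to zero by the very definition of the inclusion $K_P \hookrightarrow N_{\sigma_P}$. Since $\p$ is integral and saturated, the quotient $N_{\omega_P}/K_P$ is canonically identified with $N_\tau$, and so the composite factors through a linear map $\varphi_P \colon N_\tau \to N_{\theta_P}$. Geometrically, $\varphi_P(\f)$ is the image in $\theta_P$ of the fibre polyhedron $\omega_P \cap \p^{-1}(\f)$, which is a translate of $P \subseteq \sigma_P$ and hence projects to a single point modulo $K_P$, in direct analogy with formula \eqref{eqn: formula for position map}.

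Finally, I would identify the fibre cocharacter lattice exactly as in the vertex case. Define
\[ \psi \colon N_{\sigma_P} \times N_\tau \to N_{\theta_P}, \qquad (\mathrm{g},\mathrm{h}) \mapsto \varphi_P(\mathrm{h}) - [\mathrm{g}], \]
where $[\mathrm{g}]$ denotes the class in $N_{\theta_P}$. The map is surjective (take $\mathrm{h} = 0$), and choosing any integral splitting of $N_{\omega_P} \to N_\tau$ (possible since $N_\tau$ is free) identifies $\ker \psi$ with $N_{\omega_P}$. This yields the isomorphism $(N_{\sigma_P} \times N_\tau)/N_{\omega_P} \cong N_{\theta_P}$, showing the structure group is $T_{\theta_P}$. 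The rubber action corresponds to the composite $N_\tau \hookrightarrow N_\Sigma \times N_\tau \to (N_{\sigma_P} \times N_\tau)/N_{\omega_P}$, which under $\psi$ sends $\mathrm{h} \mapsto \varphi_P(\mathrm{h})$; tensoring with $\Gm$ gives the stated description. The main obstacle is the identification $\ker \psi = N_{\omega_P}$, which is a short diagram-chase through the splitting but is the key step that makes the entire characterisation independent of the splitting chosen.
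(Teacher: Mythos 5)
Your proposal is correct and follows essentially the same route as the paper: both arguments are cocharacter-lattice computations identifying the fibre torus of $Y_P^\circ \to X_P^\circ$ with $(N_{\sigma_P} \times N_\tau)/N_{\omega_P} = N_{\theta_P}$ and reading off the rubber action as the composite $N_\tau \to (N_{\sigma_P}\times N_\tau)/N_{\omega_P}$, which is $\varphi_P$ by construction. The only difference is organisational: the paper asserts the identification $(N_{\sigma_P}\times N_\tau)/N_{\omega_P}=N_{\sigma_P}/K_P$ in the preamble and defines $\varphi_P$ directly as the rubber composite, whereas you build $\varphi_P$ first from $N_{\omega_P}/K_P \cong N_\tau$ and then verify the identification explicitly via the map $\psi$, exactly as in the vertex case.
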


\begin{proof} The torus $Y_P^\circ$ has cocharacter lattice $(N_\Sigma \times N_\tau)/N_{\omega_P}$ while the torus $X_{P}^\circ$ has cocharacter lattice $N_\Sigma/N_{\sigma_P}$. It follows that the subtorus of $Y_P^\circ$ which preserves the fibres has cocharacter lattice:
\begin{equation*} (N_{\sigma_P}\times N_\tau)/N_{\omega_P} =N_{\theta_P}.\end{equation*}
This proves the first part. For the second part, we consider the composite
\begin{equation*} N_\tau \to N_{\sigma_P} \times N_\tau \to (N_{\sigma_P} \times N_\tau)/N_{\omega_P} = N_{\theta_P}\end{equation*}
which by definition gives the rubber torus action. This clearly maps $\tau$ to $\theta_P$ and so we obtain the desired tropical position map $\varphi_P$. The description of $\theta_P$ as the quotient by directions appearing in $P$ shows that $\varphi_P$ indeed records the position of $P$, valued in the appropriate quotient vector space (see \S \ref{sec: examples} for examples). \end{proof}

\begin{remark} If $P=v$ is a vertex we have $K_P=0$ and $\theta_P=\sigma_P$. In this setting, the previous result refines Theorem~\ref{prop: rubber action toric} by identifying the torus $T_{\sigma_v}$ with the structure group of the principal bundle $Y_v^\circ \to X_v^\circ$. \end{remark}

\subsection{Rubber action (general case)} \label{sec: rubber action general case} Now consider an arbitrary normal crossings pair $(X|D)$ and a tropical expansion $\Upsilon$ of $\Sigma=\Sigma(X|D)$ with base cone $\tau$. The \textbf{rubber torus} is defined as in the toric case:
\begin{equation*} T_\tau = \tau \otimes \Gm.\end{equation*}
As before, for each locally-closed stratum $Y_P^\circ \subseteq Y_\Upsilon$ indexed by a polyhedron $P$, there is a well-defined quotient cone $\theta_P$ of $\sigma_P$ and a tropical position map $\varphi_P \colon \tau \to \theta_P$.

\begin{theorem}\label{prop: rubber action general} There is a canonical global rubber action
$$T_\tau \acts \Xcal_{\Upsilon}$$
which is equivariant with respect to both $\pi \colon \Xcal_{\Upsilon} \to U_\tau$ and $\rho \colon \Xcal_{\Upsilon} \to X$. For each locally-closed stratum $Y_P^\circ$ of the central fibre, the structure group of the principal bundle
\begin{equation*} \rho \colon Y_P^\circ \to X_P^\circ \end{equation*}
is naturally identified with $T_{\theta_P}$, and the action of the rubber torus $T_\tau$ on the fibres of $\rho$ is given by the tropical position map:
\begin{equation*} \varphi_P \otimes \Gm \colon T_\tau \to T_{\theta_P}.\end{equation*}
\end{theorem}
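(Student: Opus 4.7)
The plan is to reduce the general case to the toric setting of Theorems~\ref{prop: rubber action toric} and~\ref{prop: rubber action toric locally closed}, exploiting the étale-local toric structure of a normal crossings pair.

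For the construction of the global rubber action, I would realize $\Xcal_\Upsilon$ as a fibre product
\[ \Xcal_\Upsilon = (X \times U_\tau) \times_{\Acal_{\Sigma \times \tau}} \Acal_\Upsilon, \]
where $\Acal_{(-)}$ denotes the Artin fan of a cone complex and the bottom morphism is the tautological one (this presentation is a general fact about logarithmic modifications). The rubber torus acts on $U_\tau$ by translation and trivially on $\Acal_\tau = [U_\tau / T_\tau]$, and hence on $X \times U_\tau$ over $\Acal_{\Sigma \times \tau}$ (trivially on $X$ and $\Acal_\Sigma$). By the universal property of the fibre product this lifts uniquely to a $T_\tau$-action on $\Xcal_\Upsilon$; this is the promised rubber action. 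Equivariance of $\pi$ is built into the construction, and equivariance of $\rho$ holds because $T_\tau$ acts trivially on $X$.

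For the description of the locally-closed strata, fix a polyhedron $P$ with associated cones $\omega_P \leq \Upsilon$ and $\sigma_P \leq \Sigma$. Pick a strict étale cover of $X$ by charts $U \to V$ mapping to toric pairs $V$; such a cover exists because $(X|D)$ is a normal crossings pair. Pulling back $\Xcal_\Upsilon \to X \times U_\tau$ along such a chart reduces the situation to the toric setting of Theorem~\ref{prop: rubber action toric locally closed}, which identifies $Y_P^\circ \to X_P^\circ$ étale-locally with a principal $T_{\theta_P}$-bundle on which $T_\tau$ acts through $\varphi_P \otimes \Gm$. Because $T_{\theta_P}$ and $\varphi_P$ are defined intrinsically from the combinatorial data $(\Upsilon, \tau, \sigma_P)$ and do not depend on the chosen chart, these local descriptions are canonically identified on overlaps and descend to the desired global principal $T_{\theta_P}$-bundle structure on $Y_P^\circ \to X_P^\circ$, with rubber action given by $\varphi_P \otimes \Gm$.

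The main obstacle I anticipate is making the gluing rigorous: one has to check that transition data between distinct toric charts acts through $T_{\theta_P}$ (rather than some larger torus of bundle automorphisms) and commutes with the rubber action produced on each chart. Both facts are encoded in the intrinsic tropical description of $\theta_P$ and $\varphi_P$, but translating this into a genuine faithfully-flat descent argument requires checking functoriality of the toric-case construction of Theorem~\ref{prop: rubber action toric locally closed} under strict étale base change, together with compatibility with the fibre-product presentation of $\Xcal_\Upsilon$ used in the first step.
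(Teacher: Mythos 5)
Your proposal is correct and rests on the same core idea as the paper's proof (reduce to the toric computation of Theorem~\ref{prop: rubber action toric locally closed} via the local toric structure of a normal crossings pair), but the packaging differs in two ways worth comparing. For the global action, you use the fibre-product presentation $\Xcal_\Upsilon = (X\times U_\tau)\times_{\Acal_{\Sigma\times\tau}}\Acal_\Upsilon$ and lift the action through the universal property; the paper instead covers $X$ by the weak atomic opens $V_\sigma$ (unions of strata indexed by faces of $\sigma$, each admitting a strict smooth map to $\Acal_\sigma$), defines the action on each $\Xcal_\Upsilon|_{V_\sigma\times U_\tau}$ by pulling back the toric modification $(\Acal_\sigma\times U_\tau)^\dag \to \Acal_\sigma\times U_\tau$, and glues along face inclusions. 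Your version is more global and equally valid, though "lifts uniquely" should be read $2$-categorically: the invariance of $U_\tau\to\Acal_\tau$ under $T_\tau$ holds only up to a canonical $2$-isomorphism, and the action axioms must be verified for the lifted data — essentially the same check the paper performs locally. For the strata, the obstacle you flag (descent of the $T_{\theta_P}$-bundle structure and of the rubber action across overlapping étale toric charts) is real in your setup but is exactly what the paper's choice of charts avoids: the single open $V_{\sigma_P}$ contains all of $X_P^\circ$ and maps strictly and smoothly to $\Acal_{\sigma_P}$, so $Y_P^\circ\to X_P^\circ$ is literally the pullback of its toric counterpart along one map, and Theorem~\ref{prop: rubber action toric locally closed} applies with no gluing. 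If you keep your étale-chart route, the descent does go through — the torus $T_{\theta_P}=\bigl((N_{\sigma_P}\times N_\tau)/N_{\omega_P}\bigr)\otimes\Gm$ and the map $\varphi_P$ are defined from the combinatorics alone and the transition automorphisms of a normal crossings chart preserve the stratification and the characters of the fibre torus — but replacing the cover by the $V_\sigma$'s is the cleaner fix.
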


\begin{proof} For each cone $\sigma \leq \Sigma$ there is an associated open set $V_\sigma \subseteq X$ defined as the union of strata indexed by faces of $\sigma$. This is an atomic neighbourhood for the logarithmic scheme $(X|D)$ in the weak sense, i.e. there is a strict smooth morphism $V_\sigma \to \Acal_\sigma=[U_\sigma/T_\sigma]$ (note that typically it is necessary to restrict to a smaller open set in order to obtain an atomic neighbourhood in the usual sense). The $V_\sigma$ cover $X$ and their inclusion poset is isomorphic to the face-inclusion poset of $\Sigma$. Moreover the cover is closed under intersections. This weak atomic covering induces a similar covering of $X \times U_\tau$. Locally with respect to this covering, $\Xcal_{\Upsilon}$ is obtained from $X\times U_\tau$ by pulling back the associated toric modification:
\bcd
\Xcal_{\Upsilon}|_{V_{\sigma}\times U_\tau} \ar[r] \ar[d] \ar[rd,phantom,"\square"] & V_\sigma \times U_\tau \ar[d] \\
(\Acal_\sigma \times U_\tau)^\dag \ar[r] & \Acal_\sigma \times U_\tau.
\ecd
We have $\Acal_\sigma \times U_\tau = [(U_\sigma \times U_\tau)/T_{\sigma}]$ and $(\Acal_\sigma \times U_\tau)^\dag = [(U_\sigma \times U_\tau)^\dag/T_\sigma]$. The action $T_\tau \acts (U_\sigma \times U_\tau)^\dag$ commutes with the action of $T_\sigma$ and hence descends to an action on the quotient. The action
\begin{equation*} T_\tau \acts \Xcal_{\Upsilon}|_{V_{\sigma}\times U_\tau} \end{equation*}
is then defined via the cartesian square above, acting on $(\Acal_\sigma \times U_\tau)^\dag$ as above and acting on $V_\sigma \times U_\tau$ and $\Acal_\sigma \times U_\tau$ through the second factor.

If $\rho$ is a face of $\sigma$ then $V_{\rho} \subseteq V_{\sigma}$ and so $\Xcal_{\Upsilon}|_{V_{\rho}\times U_\tau} \subseteq \Xcal_{\Upsilon}|_{V_{\sigma}\times U_\tau}$. It is easy to see that the action of $T_{\tau}$ on $\Xcal_{\Upsilon}|_{V_{\sigma}\times U_\tau}$ restricts to the action on $\Xcal_{\Upsilon}|_{V_{\rho}\times U_\tau}$. Therefore the actions glue to produce a global action on $\Xcal_{\Upsilon}$. Locally the maps $\pi$ and $\rho$ are obtained as the composites
\begin{align*} & \Xcal_{\Upsilon}|_{V_{\sigma}\times U_\tau} \to (\Acal_\sigma \times U_\tau)^\dag \to U_\tau \\
& \Xcal_{\Upsilon}|_{V_{\sigma}\times U_\tau} \to (V_\sigma \times U_\tau) \to V_\sigma	
\end{align*}
respectively. By construction both maps are equivariant.

It remains to describe the action on the locally-closed strata of the central fibre. Since the principal bundles $Y_P^\circ \to X_P^\circ$ are pulled back from their toric counterparts, this follows directly from the toric case (Theorem~\ref{prop: rubber action toric locally closed}).
 \end{proof}

\section{$2$-morphisms in the universal target} \label{sec: 2-morphisms}

\noindent The rubber action furnishes every logarithmic expansion with a natural symmetry group. We now explain the relationship between this action and the moduli space of expanded stable maps. The root of the interplay is the fact that the universal expanded target is naturally an Artin stack. From this the rubber action arises naturally, as a consequence of the $2$-morphisms.

\subsection{Expansions: tropical, universal and versal} \label{sec: constructing universal target} The moduli space of stable maps to expansions is constructed \cite{RangExpansions} as a logarithmic modification of the space of logarithmic stable maps to the unexpanded target \cite{GrossSiebertLog,ChenLog,AbramovichChenLog}.

The construction proceeds by applying toroidal semistable reduction \cite{AbramovichKaru,Molcho} to the universal family over the unexpanded moduli space \cite[\S\S 2--3]{RangExpansions}. This approach is geometric: it expresses the expanded moduli space as an explicit logarithmic modification of the unexpanded moduli space. A drawback is that the space constructed is not shown to represent a moduli functor of predeformable maps to expanded targets.

 To properly formulate such a moduli functor, it is necessary to construct a suitable universal expanded target, and its underlying moduli space of target expansions. This is achieved in \cite[\S 3]{MR20} as a key input to defining logarithmic Donaldson--Thomas theory. We summarise the procedure.

\subsubsection{Tropical} \label{sec: tropical target} Both the universal expanded target and its underlying moduli space of target expansions are constructed from the corresponding tropical spaces, i.e. cone complexes. The \textbf{tropical moduli space of target expansions} \cite[\S 3]{MR20}
\begin{equation*} \T = \T(X|D) \end{equation*}
parametrises embedded $1$-complexes in $\Sigma=\Sigma(X|D)$. The construction proceeds as follows. To every combinatorial type of $1$-complex $\mathsf{G}$, there is an associated cone $\mathbb{X}_{\mathsf{G}}$ parametrising its edge lengths. The idea is to glue the $\mathbb{X}_{\mathsf{G}}$ together, with gluing maps indexed by appropriate reduction operations on the set of combinatorial types $\mathsf{G}$.

These reductions are captured in the notion of \emph{surjection} \cite[Definition 3.3.4]{MR20}. This encompasses two distinct operations on combinatorial types: contraction of finite edges (smoothing nodes), and introduction of bivalent vertices when the images of two edge interiors collide (component bubbling, see \cite[Example~3.3.3]{MR20}).

Every surjection $\mathsf{G} \to \mathsf{H}$ induces an injective cone map $\mathbb{X}_{\mathsf{H}} \hookrightarrow \mathbb{X}_{\mathsf{G}}$, but in the case of component bubbling the image may not be a face. Consequently, it is necessary to compatibly subdivide each $\mathbb{X}_{\mathsf{G}}$ in order to ensure that every surjection of combinatorial types induces an inclusion of a subcomplex. The system of subdivisions is non-unique, though all choices will lead to virtually birational moduli spaces of expanded stable maps. Once a choice is made, the subdivisions $\mathbb{Y}_{\mathsf{G}}$ may be glued together to form $\T$.

An additional complication arises from automorphisms of combinatorial types. This necessitates taking automorphism-equivariant subdivisions. The resulting diagram of cones contains self-maps, but a final barycentric subdivision removes these \cite[\S 2.6]{ACP}, resulting in a cone complex instead of a cone stack. We emphasise that it is only the conical structure of $\T$ that depends on choices; its support is canonically defined, with each point $\f \in |\T|$ corresponding to a unique embedded $1$-complex $\mathsf{G}_{\f}$ in $\Sigma$.

It remains to construct the \textbf{universal tropical expansion} \cite[\S 3.5]{MR20}:
\begin{equation*} \Upsilon = (\Sigma \times \T)^\dagger \overset{\p}{\longrightarrow} \T. \end{equation*}
Each of the $\mathbb{Y}_{\mathsf{G}}$ constructed above carries a canonical universal $1$-complex $\mathsf{G} \to \mathbb{Y}_{\mathsf{G}}$ together with a morphism $\mathsf{G} \hookrightarrow \Sigma \times \mathbb{Y}_{\mathsf{G}}$. However, these are not necessarily compatible with respect to the inclusions $\mathbb{Y}_{\mathsf{H}} \hookrightarrow \mathbb{Y}_{\mathsf{G}}$. As before, it is necessary to subdivide in order to guarantee compatible gluing. This produces, non-uniquely, a family of tropical expansions $\Upsilon \to \T$ as required. Given $\f \in |\T|$ the fibre $\Upsilon_\f$ is a polyhedral subdivision of the canonical fibre
\begin{equation*} \Upsilon_\f \to \mathsf{G}_\f \end{equation*}
obtained by introducing additional bivalent vertices (tube bubbling). One final subdivision is required in order to guarantee flatness of $\p \colon \Upsilon \to \T$ and this completes the construction.

 For the rest of this paper, we fix a choice of universal tropical expansion $\Upsilon \to \T$.

\begin{remark} \label{rmk: T not complete} The morphism $\Upsilon = (\Sigma \times \T)^\dag \to \Sigma \times \T$ is a subdivision in the generalised sense (see Definition~\ref{def: subdivision}). As a consequence, the associated universal target expansion will usually be non-proper over the associated moduli space of target expansions.

The universal target may be compactified, non-uniquely and at the cost of a further subdivision of $\T$ \cite[Remark 3.5.2]{MR20}. This amounts to a choice of factorisation of the non-complete subdivision through a complete subdivision (see \S \ref{sec: subdivisions}). A compactified universal target is unnecessary for applications to enumerative geometry. It can be a helpful conceptual crutch, but equally it can lead to bogus complications. Our results on the rubber action apply uniformly to non-compact and compact universal targets.\end{remark}

\subsubsection{Universal} The \textbf{moduli space of target expansions} 
\begin{equation*} \Exp=\Exp(X|D) \end{equation*}
is by definition the Artin fan associated to the moduli space of tropical expansions $\T$. To each cone $\tau \in \T$ we associate the Artin cone $\Acal_\tau = [U_\tau/T_\tau]$, where $U_\tau$ is the associated affine toric variety and $T_\tau$ is its dense torus. These are glued together according to the face inclusions of $\T$ in order to produce $\Exp$. Note that each Artin cone $\Acal_\tau$ is an open substack of $\Exp$.

The tropicalisation of $\Exp$ is just $\T$ again, and so the universal tropical expansion $\Upsilon$ defines a \textbf{universal target expansion}:
\begin{equation*} \Xfrak = (X \times \Exp)^\dagger \overset{p}{\longrightarrow} \Exp.\end{equation*}
The morphism $p \colon \Xfrak \to \Exp$ is representable, and flat with reduced fibres.

\subsubsection{Versal} Fix a cone $\tau \in \T$. The universal tropical expansion restricts to a tropical expansion
\begin{equation*} \Upsilon_\tau = (\Sigma \times \tau)^\dagger \overset{\p}{\longrightarrow} \tau \end{equation*}
over a single cone. Let $\Acal_\tau \subseteq \Exp$ be the corresponding open substack, and let $\Xfrak_{\tau}=\Xfrak|_{\Acal_\tau}$ be the restriction of the universal target. By definition this is the logarithmic modification of $X \times \Acal_\tau$ induced by $\Upsilon_\tau$:
\begin{equation*} \Xfrak_{\tau} = (X \times \Acal_\tau)^\dag \to  \Acal_\tau. \end{equation*}
There is an associated logarithmic modification on the prequotient:
\begin{equation*} \Xcal_{\tau} = (X\times U_\tau)^\dag \to U_\tau.\end{equation*}
This is a logarithmic expansion of $X$ over $U_\tau$ (\S \ref{sec: tropical expansions general case}). We refer to $\Xcal_{\tau}$ as a \textbf{versal expanded target} and $U_\tau$ as a \textbf{versal space of target expansions}.\medskip

\noindent \textbf{Observation.} The universal target is the quotient of the versal target by the rubber torus action described in \S \ref{sec: rubber action general case}:
\begin{equation} \Xfrak_\tau = [ \Xcal_{\tau} / T_\tau] \to [U_\tau / T_\tau] = \Acal_\tau .\end{equation}
This is the key observation which leads to the appearance of the rubber torus in the moduli problem.

\begin{remark}[Embedded $1$-complexes versus tropical stable maps] \label{rmk: T is not tropical stable maps} The tropical moduli space of target expansions $\T$ parametrises embedded $1$-complexes. In the setting of logarithmic Gromov--Witten theory, one might hope to replace $\T$ with a moduli space of tropical stable maps. However, this leads to a badly-behaved moduli problem: length parameters of contracted edges give rise to one-parameter families of rubber automorphisms which act trivially on the expanded target, forcing all such maps to be unstable and precluding properness of the moduli space. For expanded stable maps we must therefore also use embedded $1$-complexes to parametrise target expansions.\end{remark}

\subsection{Definition of expanded stable maps} With hindsight, the universal expansion $\Xfrak \to \Exp$ can be used to give an alternative description of the space of expanded stable maps, as an algebraic stack consisting of predeformable maps to the expanded target.

Here we define the objects and isomorphisms of this stack. It is not our goal to provide a comprehensive foundational treatment, and we defer the verification of key properties to future work. Our aims are more targeted, and focus on those aspects of the theory which are truly new: formulating the rubber action in terms of tropical geometry, and carefully explaining its relation to the $2$-morphisms in the universal expansion.

\begin{definition}\label{def: expanded map}
An \textbf{expanded map} to $(X|D)$ (over a base scheme $S$) is a $2$-commutative diagram of algebraic stacks
\bcd
C \ar[d,"\pi" left] \ar[r,"F"] & \Xfrak \ar[d,"p"] \\
S \ar[r,"G"] & \Exp
\ecd
such that $C \to S$ is a family of marked prestable curves and $F$ is predeformable, i.e. $F$ does not intersect codimension-$2$ strata of the expanded target, and intersects codimension-$1$ strata only at marked points and nodes, with equal tangencies on the two sides of every node.

 The marking sections of $\pi$ are suppressed in the notation. We denote by $\alpha$ the chosen $2$-isomorphism $\alpha \colon G \circ \pi \cong p \circ F$ (this is part of the data).
\end{definition}

%The correct notion of isomorphism between expanded stable maps incorporates the $2$-isomorphisms in the stacks $\Xfrak$ and $\Exp$.

\begin{definition} \label{def: isomorphisms of expanded maps 1} An \textbf{isomorphism} between expanded maps
\begin{equation*} (C_1 \to S_1, G_1,F_1, \alpha_1) \cong (C_2 \to S_2, G_2,F_2,\alpha_2) \end{equation*}
consists of the following data:
\begin{enumerate}
\item An isomorphism of marked source curves, i.e. a cartesian square in which both $\varphi$ and $\psi$ are isomorphisms:
\bcd
C_1 \ar[d,"\pi_1" left] \ar[r,"\varphi"] & C_2 \ar[d,"\pi_2"] \\
S_1 \ar[r,"\psi"] & S_2.
\ecd
\item $2$-isomorphisms $\lambda: G_1 \cong G_2 \circ \psi$ and $\mu: F_1 \cong F_2 \circ \varphi$.
\end{enumerate}
The $2$-isomorphisms must be compatible in the sense that the following diagram in the groupoid $\Hom(C_1,\Exp)$ commutes:
\begin{equation}\label{diag: commuting 2-morphism diagram}
\begin{tikzcd}
G_1 \circ \pi_1 \ar[d,"\pi_1^\star \lambda" left] \ar[rr,"\alpha_1"] & & p \circ F_1  \ar[d,"p_\star \mu"] \\
G_2 \circ \psi \circ \pi_1 \ar[equal]{r}   & G_2 \circ \pi_2 \circ \varphi \ar[r,"\varphi^\star \alpha_2"]& p \circ F_2 \circ \varphi.
\end{tikzcd}
\end{equation}
Since $p$ is representable, the choice of $\lambda$ determines the choice of $\mu$.
\end{definition}

In particular, an automorphism of an expanded map  $(C \to S, G,F, \alpha)$ is the data of an automorphism $\varphi\colon C\to C$ over $S$ and $2$-isomorphisms $\lambda \colon G \cong G$ and $\mu \colon F \cong F \circ \varphi$ such that the diagram \eqref{diag: commuting 2-morphism diagram} commutes. On the other hand, Definition~\ref{def: isomorphisms of expanded maps 1} clearly generalises to allow an arbitrary morphism $\psi \colon S_1 \to S_2$ on the base, defining the morphisms $(C_1 \to S_1, G_1, F_1, \alpha_1) \to (C_2 \to S_2, G_2, F_2, \alpha_2)$ in the stack of expanded maps.

There is a suitable notion of \textbf{stability} for expanded maps, similar to \cite[Definition 4.2.1]{MR20}. We discuss this in detail in \S \ref{sec: stability}, and show that it implies finiteness of automorphisms (but is not equivalent to it).

\begin{comment}
\begin{remark} For completeness, a general morphism between expanded maps
\begin{equation*} (C_1 \to S_1, G_1,F_1, \alpha_1) \to (C_2 \to S_2, G_2, F_2,\alpha_2) \end{equation*}
covering a given base morphism $h \colon S_1 \to S_2$, is given by:
\begin{enumerate}
\item A morphism $\varphi \colon C_1 \to C_2$ such that the following square is cartesian:
\bcd
C_1 \ar[r,"\varphi"] \ar[d] & C_2 \ar[d] \\
S_1 \ar[r,"h"] & S_2.
\ecd
\item $2$-isomorphisms $\lambda: G_2 \circ h \cong G_1$ and $\mu: F_2 \circ \varphi \cong F_1$, such that \eqref{diag: commuting 2-morphism diagram} commutes.
\end{enumerate} 
\end{remark}
\end{comment}

The above definitions make no use of logarithmic structures. Together, they produce a stack over the category of schemes.~To prove that this stack is algebraic, finite-type and proper, logarithmic structures are unnecessary. A model for the required arguments is given in \cite{MR20}. This mirrors and generalises J. Li's approach for smooth divisors, see Remark~\ref{rem: Jun Li comparison}.

Although the above definitions make no use of logarithmic structures, the resulting moduli space is still expected to carry a natural logarithmic structure. This will be obtained by pulling back from the moduli space of expanded maps to the Artin fan, the latter being irreducible with toroidal boundary. Toroidal and tropical techniques are still expected to play a central role.

\begin{remark}The space of expanded stable maps is expected to coincide, up to a quasi-finite morphism arising from saturation issues, with a space of logarithmic expanded stable maps. A logarithmic formulation will be useful for analysing the deformation theory. For smooth pairs, the role of the rubber action in the logarithmic setting has been explained in \cite{MarcusWise}.

On the other hand, it may be possible to build an obstruction theory directly, without appealing to logarithmic structures, along the lines of \cite{Li2} and the reformulation \cite[\S 3.2]{AbramovichMarcusWiseComparison}. This is because expanded stable maps are disjoint from the codimension-$2$ strata of the target.	
\end{remark}

\begin{remark} In logarithmic Donaldson--Thomas theory, predeformability is an open condition in the relative Hilbert scheme of $\Xfrak \to \Exp$. In that context the deformation theory may be easily described without reference to logarithmic structures. On the Gromov--Witten side, however, predeformability is not an open condition in the space of maps to $\Xfrak$. \end{remark}

\subsection{Pullback of the universal target}\label{sec: pullback of universal target} The data defining an expanded map is rather abstract, involving morphisms $F$ and $G$ to Artin stacks. We provide a more concrete description, by separating the data $(C \to S, G)$ from the data $(F, \alpha)$. We identify the pair $(F,\alpha)$ with the data of a morphism to a pullback of the universal target. This has the benefit that the target is a scheme.
\begin{lemma} \label{lem: stable map given by map to pullback} The data of an expanded map $(C \to S, G, F, \alpha)$ is equivalent to the data $(C \to S, G)$ together with a (predeformable) morphism $f$ fitting into a commuting triangle:
\bcd
C \ar[r,"f"] \ar[rd] & G^\star \Xfrak \ar[d] \\
& S.
\ecd
\end{lemma}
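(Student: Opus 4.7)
The plan is to recognise this as a direct application of the 2-categorical universal property of the fibre product. By definition, $G^\star \Xfrak$ fits into a 2-cartesian square
\[
\begin{tikzcd}
G^\star \Xfrak \ar[r,"q"] \ar[d,"\bar p" left] \ar[dr,phantom,"\square"] & \Xfrak \ar[d,"p"] \\
S \ar[r,"G" below] & \Exp,
\end{tikzcd}
\]
equipped with a tautological 2-isomorphism $\beta \colon G \circ \bar p \cong p \circ q$. Its 2-universal property says that, for any stack $C$ equipped with morphisms $\pi \colon C \to S$ and $F \colon C \to \Xfrak$ together with a 2-isomorphism $\alpha \colon G \circ \pi \cong p \circ F$, there exists a morphism $f \colon C \to G^\star \Xfrak$, unique up to unique 2-isomorphism, such that $\bar p \circ f = \pi$, $q \circ f = F$, and $f^\star \beta = \alpha$.

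I would use this in both directions. Starting from $(C \to S, G, F, \alpha)$, the universal property produces $f \colon C \to G^\star \Xfrak$ over $S$ (with respect to $\pi$ and $\bar p$). Conversely, given $f \colon C \to G^\star \Xfrak$ over $S$, I set $F \colonequals q \circ f$ and $\alpha \colonequals f^\star \beta$, which recovers the required 2-isomorphism $G \circ \pi = G \circ \bar p \circ f \cong p \circ q \circ f = p \circ F$. These two constructions are mutually inverse up to the unique 2-isomorphisms dictated by the universal property, so they give the asserted equivalence of data.

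It remains to check that predeformability is preserved. Since $p$ is representable, so is $\bar p$, and hence $G^\star \Xfrak \to S$ is a scheme over $S$. The morphism $q \colon G^\star \Xfrak \to \Xfrak$ is strict with respect to the stratifications coming from $\Xfrak \to \Exp$: the strata of $G^\star \Xfrak$ are pulled back from those of $\Xfrak$, with the same codimensions. Consequently the intersection conditions with codimension-$1$ and codimension-$2$ strata (and the equal tangency condition at nodes) are satisfied by $F = q \circ f$ if and only if they are satisfied by $f$. This shows $f$ is predeformable exactly when $F$ is, concluding the proof.
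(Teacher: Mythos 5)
Your proof is correct and takes essentially the same route as the paper: the paper's proof simply unwinds the explicit construction of the $2$-fibre product, observing that an object of $\Hom(C, G^\star \Xfrak)$ \emph{is} by definition a triple $(\pi, F, \alpha)$, which is the same tautology you express via the universal property. Your closing paragraph checking that predeformability transfers between $F$ and $f$ is a sensible addition that the paper leaves implicit.
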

\begin{proof} This is tautological. By the construction of fibre products of stacks, an object $f \in \Hom(C,G^\star \Xfrak)$ is given by the data
\begin{equation}\label{eqn: data of map to fibre product} (\pi: C \to S, F \colon C \to \Xfrak, \alpha : G \circ \pi \cong p \circ F)\end{equation}
which is precisely the extra data $(F,\alpha)$ required to obtain an expanded map.
\end{proof}

Since $p \colon \Xfrak \to \Exp$ is representable, the pullback $G^\star \Xfrak$ is a scheme. We now explain concretely how the data $(F,\alpha)$ determines the map $f$ to this scheme. Consider an expanded map over a closed point:
\bcd
C \ar[d,"\pi" left] \ar[r,"F"] & \Xfrak \ar[d,"p"] \\
\Speck \ar[r,"G"] & \Exp.
\ecd
We use the notation established in \S \ref{sec: constructing universal target}. There is a unique cone $\tau \in \T$ such that the morphism $G \colon \Speck \to \Exp$ factors through the closed substack:
\begin{equation*} \Bcal T_{\tau} \subseteq \Acal_\tau \subseteq \Exp.\end{equation*}
The (restriction of the) universal target $\Xfrak_{\tau}$ is the quotient of the versal target:
\begin{equation*} \Xfrak_\tau = [ \Xcal_{\tau} / T_\tau] \to [U_\tau / T_\tau] = \Acal_\tau .\end{equation*}
Restricting over the closed substack $\Bcal T_{\tau} \subseteq \Acal_\tau$ gives the quotient
\begin{equation*} [Y_\tau/T_\tau] \end{equation*}
where $Y_\tau$ is the central fibre of the logarithmic expansion $\Xcal_{\tau}$. The original expanded map can be rewritten as
\begin{equation} \label{eqn: expanded map over point}
\begin{tikzcd}
C \ar[d,"\pi" left] \ar[r,"F"] & {[Y_\tau/T_\tau]} \ar[d,"p"] \\
\Speck \ar[r,"G"] & \Bcal T_\tau
\end{tikzcd}
\end{equation}
and it follows immediately that there is an isomorphism:
\begin{equation*}\label{eqn: isomorphism pullback universal target with expansion} \xi \colon G^\star \Xfrak \cong Y_\tau.\end{equation*}
However, there is not a unique choice for $\xi$. The collection of such isomorphisms is a $T_\tau$-torsor, the action given by composing with the rubber automorphisms of $Y_\tau$ (see Lemma~\ref{lem: aut is rubber} below).
\begin{comment}
Strictly speaking, $G^\star \Xfrak$ can't be \emph{equal} to $Y_\tau$. We fix a cleavage, which presents $G^\star \Xfrak$ as a particular category. This is isomorphic to $Y_\tau$ but not canonically so. In other words, the diagram
\bcd
Y_\tau \ar[r] \ar[d] & {[Y_\tau/T_\tau]} \ar[d] \\
\Speck \ar[r] & \Bcal T_\tau
\ecd
is cartesian, but it remains cartesian if we replace the map $Y_\tau \to [Y_\tau/T_\tau]$ by precomposing with any rubber automorphism of $Y_\tau$. 
\end{comment}

Let $R$ be the $T_\tau$-torsor on $\Speck$ corresponding to the morphism $G$. Then a choice of isomorphism $\xi$ is equivalent to a choice of trivialisation of $R$. Choose one such trivialisation. Then the composite
\begin{equation*} f \colon C \to G^\star \Xfrak \xrightarrow{\xi} Y_\tau \end{equation*}
is well-defined. This is obtained from the data $(F,\alpha)$ as follows. The morphism $F$ is given by the data of a $T_{\tau}$-torsor $P$ over $C$ together with a $T_\tau$-equivariant morphism $P \to Y_\tau$. Since we have chosen a trivialisation of $R$, the isomorphism $\alpha\colon \pi^\star R \cong P$ induces a trivialisation of $P$, i.e. a section $C \to P$. Composing produces the desired morphism $f \colon C \to Y_\tau$.

\begin{remark}\label{rmk: map to quotient stack lifts} It is not the case that the groupoid of maps to $[Y_\tau/T_\tau]$ is equivalent to the groupoid of maps to $Y_\tau$ modulo the action of $T_\tau$. For the equivalence obtained above, it is essential to work relative to $\Bcal T_\tau$ and insist on the $2$-commutativity of \eqref{eqn: expanded map over point}. Without this, the map $F \colon C \to [Y_\tau/T_\tau]$ need not lift to any map $f \colon C \to Y_\tau$, as the $T_\tau$-torsor $P$ may not be trivial.\end{remark}

%\subsection{Isomorphisms revisited}

From now on, we will use Lemma~\ref{lem: stable map given by map to pullback} to describe an expanded map as the data $(C\to S, G, f)$. We now wish to describe the isomorphisms between such objects. We require the following simple observation.
\begin{lemma} \label{lem: 2-morphism induces isomorphism of pullback} Let $p \colon \Xfrak \to \Zcal$ be a morphism of stacks and consider two morphisms:
\bcd
S \arrow[r,shift left,"H_1"] \arrow[r,shift right,"H_2" below] & \Zcal.
\ecd
Then every $2$-isomorphism $\lambda : H_1 \cong H_2$ induces an isomorphism $\lambda_\dag \colon H_1^\star \Xfrak \to H_2^\star \Xfrak$.
\end{lemma}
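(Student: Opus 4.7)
My plan is to invoke the universal property of the $2$-categorical fibre product, which provides both existence and (essential) uniqueness of the induced morphism.

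First I recall the setup. The pullback $H_i^\star \Xfrak$ comes equipped with projections $q_S^{(i)} \colon H_i^\star \Xfrak \to S$ and $q_\Xfrak^{(i)} \colon H_i^\star \Xfrak \to \Xfrak$, together with a $2$-isomorphism
\[ \alpha_i \colon H_i \circ q_S^{(i)} \cong p \circ q_\Xfrak^{(i)}. \]
The universal property states that for any stack $T$, specifying a morphism $T \to H_i^\star \Xfrak$ is equivalent to specifying a triple $(a \colon T \to S,\ b \colon T \to \Xfrak,\ \beta \colon H_i \circ a \cong p \circ b)$, up to a natural notion of $2$-isomorphism.

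Next I construct $\lambda_\dag$. Starting from the tautological triple $(q_S^{(1)}, q_\Xfrak^{(1)}, \alpha_1)$ defining $H_1^\star \Xfrak$, I form the new triple
\[ \bigl( q_S^{(1)},\ q_\Xfrak^{(1)},\ \alpha_1 \bullet (\lambda^{-1} \star q_S^{(1)})\bigr), \]
where $\lambda^{-1} \star q_S^{(1)} \colon H_2 \circ q_S^{(1)} \cong H_1 \circ q_S^{(1)}$ is the whiskering and $\bullet$ is vertical composition of $2$-morphisms. This triple defines, via the universal property of $H_2^\star \Xfrak$, the desired morphism $\lambda_\dag \colon H_1^\star \Xfrak \to H_2^\star \Xfrak$, characterised by
\[ q_S^{(2)} \circ \lambda_\dag = q_S^{(1)}, \qquad q_\Xfrak^{(2)} \circ \lambda_\dag = q_\Xfrak^{(1)}, \qquad \alpha_2 \star \lambda_\dag = \alpha_1 \bullet (\lambda^{-1} \star q_S^{(1)}). \]

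Finally I check invertibility. The same construction applied to $\lambda^{-1} \colon H_2 \cong H_1$ yields a morphism $(\lambda^{-1})_\dag \colon H_2^\star \Xfrak \to H_1^\star \Xfrak$. The composite $(\lambda^{-1})_\dag \circ \lambda_\dag$ classifies the triple obtained from $(q_S^{(1)}, q_\Xfrak^{(1)}, \alpha_1)$ by composing with $\lambda^{-1}$ and then $\lambda$, which is canonically $2$-isomorphic to the original triple. By uniqueness in the universal property, $(\lambda^{-1})_\dag \circ \lambda_\dag = \mathrm{id}_{H_1^\star \Xfrak}$, and symmetrically for the other composition. Hence $\lambda_\dag$ is an isomorphism. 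No real obstacle arises here; the entire argument is a formal consequence of the universal property of $2$-fibre products, and the main care is simply in bookkeeping the whiskering to ensure the $2$-isomorphisms compose correctly.
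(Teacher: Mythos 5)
Your argument is correct and is essentially the same as the paper's, which defines $\lambda_\dag$ on $R$-points by $(\pi, F, \alpha) \mapsto (\pi, F, \alpha \circ \pi^\star \lambda^{-1})$ --- your whiskered $2$-cell $\lambda^{-1} \star q_S^{(1)}$ is exactly this $\pi^\star\lambda^{-1}$, merely packaged via the universal property of the $2$-fibre product rather than via the functor of points. Your explicit check of invertibility is a small addition that the paper leaves implicit.
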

\begin{proof}This is abstract nonsense. The objects of $H_i^\star \Xfrak$ over a test scheme $R$ are given by
\begin{equation*} (\pi \colon R \to S, F \colon R \to \Xfrak, \alpha \colon H_i \circ \pi \cong p \circ F)\end{equation*}
and so $\lambda_\dag \colon H_1^\star \Xfrak \to H_2^\star \Xfrak$ is defined by:
\begin{equation*} (\pi, F, \alpha) \mapsto (\pi, F, \alpha \circ \pi^\star \lambda^{-1}).\qedhere \end{equation*}\end{proof}

\begin{lemma} \label{lem: two notions of isomorphism are the same} An isomorphism of expanded maps
\begin{equation*} (C_1 \to S_1, G_1, f_1) \cong (C_2 \to S_2, G_2, f_2)\end{equation*}
as defined in Definition~\ref{def: isomorphisms of expanded maps 1}, is equivalent to the following data:
\begin{enumerate}
\item An isomorphism of source curves, as in Definition \ref{def: isomorphisms of expanded maps 1} (1).
\item A $2$-isomorphism $\lambda \colon G_1 \cong G_2 \circ \psi$.
\end{enumerate}
This data is required to be compatible with the maps $f_i$ in the sense that the following diagram (of schemes) commutes
\begin{equation} \label{diag: commuting rubber diagram}
\begin{tikzcd}
C_1 \ar[d,"\varphi" left] \ar[r,"f_1"] & G_1^\star \Xfrak \ar[d,"\Psi \circ \lambda_\dag"] \\
C_2 \ar[r,"f_2"] & G_2^\star \Xfrak
\end{tikzcd}
\end{equation}
where $\Psi$ is the canonical isomorphism $\psi^\star G_2^\star \Xfrak \to G_2^\star\Xfrak$. 
\end{lemma}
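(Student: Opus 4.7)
The plan is to apply Lemma~\ref{lem: stable map given by map to pullback} at every stage, so that the data $(F_i, \alpha_i)$ are replaced by the maps $f_i \colon C_i \to G_i^\star \Xfrak$, and then show that the compatibility condition \eqref{diag: commuting 2-morphism diagram} is equivalent to the commutativity of the scheme-theoretic square \eqref{diag: commuting rubber diagram}. Since $p$ is representable, the $2$-isomorphism $\mu$ is uniquely determined by $\lambda$, as already noted after Definition~\ref{def: isomorphisms of expanded maps 1}. The independent data on both sides of the claimed equivalence therefore consists of $(\varphi, \psi, \lambda)$, and only the compatibility conditions need to be matched.

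For the forward direction, suppose $(\varphi, \psi, \lambda, \mu)$ satisfies \eqref{diag: commuting 2-morphism diagram}. By Lemma~\ref{lem: 2-morphism induces isomorphism of pullback}, $\lambda$ induces a scheme isomorphism $\lambda_\dag \colon G_1^\star \Xfrak \to (G_2 \circ \psi)^\star \Xfrak \cong \psi^\star G_2^\star \Xfrak$, which composed with the base-change map $\Psi$ yields $\Psi \circ \lambda_\dag \colon G_1^\star \Xfrak \to G_2^\star \Xfrak$. Unwinding the universal property of the fibre product as in \eqref{eqn: data of map to fibre product}, the map $f_1$ corresponds to $(\pi_1, F_1, \alpha_1)$, and composing with $\Psi \circ \lambda_\dag$ yields the triple $(\pi_1, F_1, \alpha_1 \circ \pi_1^\star \lambda^{-1})$; while $f_2 \circ \varphi$ corresponds to $(\pi_1, F_2 \circ \varphi, \varphi^\star \alpha_2)$. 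The $2$-isomorphism $\mu$ identifies the middle entries, and \eqref{diag: commuting 2-morphism diagram} says precisely that $p_\star \mu$ identifies the third entries as well. This is exactly what is needed for \eqref{diag: commuting rubber diagram} to commute.

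For the backward direction, suppose $(\varphi, \psi, \lambda)$ is given with \eqref{diag: commuting rubber diagram} commuting. Comparing $(\Psi \circ \lambda_\dag) \circ f_1$ with $f_2 \circ \varphi$ as maps into $G_2^\star \Xfrak$ and reading them off as triples over $S_1$, the universal property of the fibre product produces a unique $2$-isomorphism $\mu \colon F_1 \cong F_2 \circ \varphi$ (uniqueness coming from the representability of $p$), and by the same unwinding as above this $\mu$ fits into \eqref{diag: commuting 2-morphism diagram}.

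The main obstacle is the careful $2$-categorical bookkeeping: the canonical equivalence $G_1 \circ \pi_1 \cong G_2 \circ \psi \circ \pi_1 = G_2 \circ \pi_2 \circ \varphi$ implicit in the cartesian square of source curves must be correctly tracked, and one must verify that it interacts coherently with $\lambda$, $\mu$, $\Psi$, and $\lambda_\dag$ so that the ``third entries'' of the triples above are really compared by the same $2$-morphism on both sides. Once this is done, the equivalence is a formal consequence of the universal property of $G_i^\star \Xfrak$ as a $2$-fibre product.
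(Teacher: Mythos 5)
Your proposal is correct and follows the same route as the paper, which simply declares the lemma ``an unraveling of definitions'' and asserts that commutativity of \eqref{diag: commuting rubber diagram} is equivalent to commutativity of \eqref{diag: commuting 2-morphism diagram}; you carry out that unraveling explicitly via the triple description \eqref{eqn: data of map to fibre product} and the formula for $\lambda_\dag$ from Lemma~\ref{lem: 2-morphism induces isomorphism of pullback}. Your identification of the third entries, $p_\star\mu\circ\alpha_1=\varphi^\star\alpha_2\circ\pi_1^\star\lambda$, is exactly the compatibility diagram, and your appeal to representability of $p$ for the uniqueness of $\mu$ matches the remark following Definition~\ref{def: isomorphisms of expanded maps 1}.
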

\begin{proof} This is an unraveling of definitions. The commutativity of \eqref{diag: commuting rubber diagram} is equivalent to the commutativity of \eqref{diag: commuting 2-morphism diagram}. \end{proof}

\begin{remark}\label{rem: Jun Li comparison} In the case of a smooth divisor, Lemmas~\ref{lem: stable map given by map to pullback} and \ref{lem: two notions of isomorphism are the same} show that the stack of expanded maps is equivalent to that constructed in \cite[\S 3]{Li1}. See also Example~\ref{example: rank 1}. \end{remark}

\subsection{Static target and the rubber action} \label{sec: rubber action versal target} \label{sec: static target} For families of maps whose expanded target does not vary, we can use Lemma~\ref{lem: two notions of isomorphism are the same} to reformulate isomorphisms in terms of the rubber action. This will be used in \S \ref{sec: boundary of moduli} to describe boundary strata of the moduli space.

Consider an expanded map $(C \to S, G, f)$. Suppose that there is a cone $\tau \leq \T$ of the tropical moduli space of target expansions such that $G$ factors through the closed substack:
\begin{equation*} \Bcal T_{\tau} \subseteq \Acal_\tau \subseteq \Exp.\end{equation*}
This always holds if $S=\Speck$. More generally it holds if the family of expanded maps is contained in a single locally-closed stratum of the moduli space. Following the discussion in \S \ref{sec: pullback of universal target}, restricting the universal target to this closed substack gives:
\bcd
{[ Y_\tau/T_\tau ]} \ar[d] \ar[r] \ar[rd,phantom,"\square"] & \Xfrak \ar[d] \\
\Bcal T_\tau \ar[r] & \Exp.
\ecd
We see that the pullback $G^\star \Xfrak$ is a locally trivial fibration over $S$ with fibre $Y_\tau$. The global twisting of this fibration is determined by the $T_\tau$-torsor $P$ giving the map $G \colon S \to \Bcal T_\tau$. 
\begin{lemma} \label{lem: pullback of universal target is mixing bundle} There is a natural isomorphism between $G^\star \Xfrak$ and the fibre bundle obtained from $P$ via the mixing construction:
\begin{equation*} G^\star\Xfrak = (P \times Y_\tau)/T_\tau.\end{equation*} \end{lemma}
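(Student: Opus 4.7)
The plan is to proceed by a formal manipulation of $2$-fibre products, exploiting the observation at the end of \S\ref{sec: constructing universal target} that the universal target over $\Bcal T_\tau$ is the quotient stack $[Y_\tau/T_\tau]$. The key fact is that $Y_\tau$ itself arises as the pullback of $[Y_\tau/T_\tau]$ along the universal $T_\tau$-torsor $\Speck \to \Bcal T_\tau$, and this torsor structure propagates tautologically under further pullback along $G$.

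First I would reduce to the static case: since $G$ factors through the closed substack $\Bcal T_\tau \subseteq \Acal_\tau \subseteq \Exp$, the pullback $G^\star \Xfrak$ coincides with $G^\star [Y_\tau/T_\tau]$. By the defining property of the classifying morphism, the torsor $P \to S$ is itself the pullback $P = S \times_{\Bcal T_\tau} \Speck$. I would then consider the horizontal composition of $2$-cartesian squares
\bcd
P \times Y_\tau \ar[r] \ar[d] \ar[rd,phantom,"\square"] & G^\star \Xfrak \ar[r] \ar[d] \ar[rd,phantom,"\square"] & {[Y_\tau/T_\tau]} \ar[d] \\
P \ar[r] & S \ar[r,"G"] & \Bcal T_\tau
\ecd
in which the right square defines $G^\star \Xfrak$ and the outer square identifies the upper-left corner with $P \times Y_\tau$, using the universal torsor description of $Y_\tau$. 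Cancellation of pullbacks then exhibits the left vertical map $P \times Y_\tau \to G^\star \Xfrak$ as a $T_\tau$-torsor obtained by pulling back $P \to S$.

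To finish, I would verify that the $T_\tau$-action on $P \times Y_\tau$ arising from this torsor structure is precisely the diagonal action on the two factors. This follows from the $T_\tau$-equivariance of the universal torsor $Y_\tau \to [Y_\tau/T_\tau]$: the action on $Y_\tau$ is the tautological one, while the action on $P$ is that of the torsor classifying $G$. Quotienting then yields the desired identification $G^\star \Xfrak = (P \times Y_\tau)/T_\tau$. I do not anticipate any serious obstacles; the lemma is essentially an instance of the general principle that $[Z/G]$ classifies $G$-twisted forms of $Z$, and the only real point of care is to interpret all diagrams $2$-categorically and to track the $T_\tau$-equivariant structure of $Y_\tau$ faithfully through the composition.
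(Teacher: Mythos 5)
Your argument is correct, but it takes a genuinely different (though equally formal) route from the paper. The paper proves the lemma by a direct comparison of functors of points: a morphism $R \to G^\star \Xfrak$ over $S$ is unwound as a triple consisting of a $T_\tau$-torsor $Q$ on $R$, a $T_\tau$-equivariant map $Q \to Y_\tau$, and an isomorphism $Q \cong \rho^\star P$, and this data is then repackaged as a $T_\tau$-equivariant map $Q \to \rho^\star(P \times Y_\tau)$, i.e.\ an $S$-morphism $R \to (P \times Y_\tau)/T_\tau$. You instead argue by base change: you pull everything back along the torsor $P \to S$, use the universal-torsor description $Y_\tau = \Speck \times_{\Bcal T_\tau} [Y_\tau/T_\tau]$ together with the tautological trivialisation of $P$ over itself to identify $P \times_S G^\star\Xfrak$ with $P \times Y_\tau$, and then descend. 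Both proofs are tautologies dressed differently; yours is arguably more conceptual, while the paper's explicit torsor bookkeeping has the practical advantage that the objects it introduces ($Q$, the equivariant map $h \colon Q \to \rho^\star(P \times Y_\tau)$) are reused verbatim in the proof of Lemma~\ref{lem: aut is rubber} to identify the $2$-automorphism $\lambda_\dag$ with the rubber action. The one point in your approach that genuinely requires care is the final verification that the descent datum corresponds to the \emph{diagonal} action on $P \times Y_\tau$ rather than the action on the $P$-factor alone; you correctly flag this, and it does hold (rewriting a representative $(tp,[p,y]) = (tp,[tp,ty])$ shows the two agree), so there is no gap.
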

\begin{proof} We compare functors of points. The desired identification is one of $S$-stacks, so let $\rho \colon R \to S$ be an arbitrary test scheme. Morphisms $R \to G^\star \Xfrak$ over $S$ are given by the following data: a $T_\tau$-torsor $Q$ over $R$, a $T_\tau$-equivariant morphism $Q \to Y_\tau$ and an isomorphism $Q \cong \rho^\star P$. 

For any scheme $U$ let $Y_{\tau}|_{U} = Y_\tau \times U$ viewed as a $U$-scheme. The data of the morphism $Q \to Y_\tau$ above is equivalent to the data of a morphism of $R$-schemes:
\begin{equation*} Q \to Y_\tau|_R = \rho^\star (Y_\tau|_S).
	\end{equation*}
Together with the isomorphism $Q \cong \rho^\star P$, this is equivalent to the data of a $T_\tau$-equivariant morphism of $R$-schemes
\begin{equation*} Q \to \rho^\star P \times_R Y_\tau|_R = \rho^\star (P \times_S Y_\tau|_S) = \rho^\star (P \times Y_\tau)\end{equation*}
which is precisely the data of an $S$-morphism $R \to (P \times Y_\tau)/T_\tau$ as claimed.
\end{proof}

\begin{remark} If $P$ is isomorphic to the trivial torsor (for instance if $S=\Speck$, as in \S \ref{sec: pullback of universal target}) then we have $(P \times Y_\tau)/T_\tau \cong Y_\tau|_S$. However, this isomorphism is not unique: it depends on a choice of trivialisation of $P$.\end{remark}
From Lemma~\ref{lem: pullback of universal target is mixing bundle} we immediately conclude:
\begin{lemma} \label{lem: expanded map to static target as map to mixing bundle}The data of the expanded map $(C \to S, G, f)$ is equivalent to the data of a $T_\tau$-torsor $P \to S$ together with a commutative diagram:
\bcd
C \ar[r,"f"] \ar[d,"\pi" left] & (P \times Y_\tau)/T_\tau \ar[ld] \\
S.
\ecd
\end{lemma}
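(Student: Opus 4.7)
The plan is to assemble this as a direct corollary of Lemma~\ref{lem: stable map given by map to pullback} and Lemma~\ref{lem: pullback of universal target is mixing bundle}, with essentially no further geometric content. The only real work is to track the translations between the three equivalent data packages.

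First I would apply Lemma~\ref{lem: stable map given by map to pullback} to rewrite the expanded map $(C \to S, G, F, \alpha)$ as the data $(C \to S, G, f)$ where $f \colon C \to G^\star \Xfrak$ is a morphism of $S$-schemes (predeformability is built in to $f$). This reduces the problem to producing an equivalence between the pair $(G,f)$ and the pair $(P, f')$ where $P$ is a $T_\tau$-torsor on $S$ and $f'$ is a morphism $C \to (P \times Y_\tau)/T_\tau$ over $S$.

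Next, since $G \colon S \to \Exp$ is assumed to factor through $\Bcal T_\tau \subseteq \Acal_\tau \subseteq \Exp$, the universal property of the classifying stack gives a canonical equivalence between morphisms $G \colon S \to \Bcal T_\tau$ and $T_\tau$-torsors $P \to S$. This is the translation of the first component. For the second component, Lemma~\ref{lem: pullback of universal target is mixing bundle} supplies a natural identification
\[ G^\star \Xfrak \;=\; (P \times Y_\tau)/T_\tau \]
of $S$-stacks, and since $T_\tau$ acts freely on $P$ the quotient is in fact a scheme. Under this identification the morphism $f \colon C \to G^\star \Xfrak$ transports to a morphism $f' \colon C \to (P \times Y_\tau)/T_\tau$ over $S$, i.e. a commutative triangle of the required form.

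The routine but slightly fussy part is checking that this correspondence is functorial and reversible: given $(P, f')$, the torsor $P$ determines $G$ up to canonical $2$-isomorphism, and $f'$ composed with the canonical isomorphism $(P \times Y_\tau)/T_\tau \cong G^\star \Xfrak$ recovers $f$. This amounts to chasing the $2$-categorical data already laid out in Lemma~\ref{lem: pullback of universal target is mixing bundle}, so I expect no genuine obstacle; the only place where one must be careful is to avoid choosing trivialisations of $P$ when none exist globally over $S$, which is precisely the reason the mixing-bundle formulation is necessary rather than working with $Y_\tau|_S$ directly.
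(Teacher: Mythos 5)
Your proposal is correct and matches the paper's own route exactly: the paper states this lemma as an immediate consequence of Lemma~\ref{lem: pullback of universal target is mixing bundle} (the identification $G^\star \Xfrak = (P \times Y_\tau)/T_\tau$), combined with the repackaging of the expanded map as $(C \to S, G, f)$ from Lemma~\ref{lem: stable map given by map to pullback} and the standard equivalence between maps $S \to \Bcal T_\tau$ and $T_\tau$-torsors on $S$. Your additional care about not choosing global trivialisations of $P$ is exactly the point the paper emphasises in the surrounding remarks.
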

We now discuss isomorphisms. Consider two expanded maps over the same base $S$ and with the same morphism $G$ to the stack of target expansions:
\begin{equation*} (C_1 \to S,G, f_1), (C_2 \to S,G, f_2). \end{equation*}
We assume as above that $G$ factors through a closed substack $\Bcal T_\tau \subseteq \Exp$ and let $P$ be the associated $T_\tau$-torsor. Writing $\Ycal_\tau=(P \times Y_\tau)/T_\tau$, we use the previous lemma to present these maps as:
\begin{equation*} (C_1 \to S, P, f_1 \colon C_1 \to \Ycal_\tau), (C_2 \to S, P, f_2 \colon C_2 \to \Ycal_\tau).\end{equation*}
Note that there is a fibrewise rubber action $T_\tau \acts \Ycal_\tau$, defined by letting $T_\tau$ act trivially on $P$ and via the rubber action on $T_\tau$.

% Definition of rubber action on fibrewise expansion bundle.
\begin{comment}
\begin{remark} The action $T_\tau \acts P \times Y_\tau$ is a right action on $P$ and a left action on $Y_\tau$ \cite{AtiyahBott}. Define the action of $\mu \in T_\tau$ as:
\[ \mu (p,y) = (p,\mu\cdot y). \]
Then for $\lambda \in T_\tau$ we have
\[ \mu \lambda (p,y) = \mu (p\cdot \lambda, \lambda \cdot y) = (p \cdot \lambda, \mu  \lambda\cdot y) = (p\cdot \lambda, \lambda \mu \cdot y) = (p,\mu \cdot y)\]
so the action descends to the quotient. Notice that this is the same action as taking:
\[ \mu (p,y) = (p \cdot \mu^{-1}, y).\]
Moreover, obviously $\mu(p,y) = (p\cdot\mu,\mu\cdot y)$ doesn't work: it gives the trivial action.\end{remark}
\end{comment}

On the other hand, since $G$ factors through $\Bcal T_\tau$ the $2$-morphisms $\lambda \colon G \cong G$ are precisely the elements of the isotropy group $T_\tau$. We saw in Lemma~\ref{lem: 2-morphism induces isomorphism of pullback} that each $\lambda \in T_\tau$ induces an automorphism $\lambda_\dag \colon G^\star \Xfrak \cong G^\star \Xfrak$.

\begin{lemma}\label{lem: aut is rubber} Under the identification $G^\star \Xfrak = \Ycal_\tau$, the automorphism $\lambda_\dag$ on $G^\star \Xfrak$ coincides with the rubber action on $\Ycal_\tau$.\end{lemma}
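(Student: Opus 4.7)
The plan is to unravel both automorphisms via the functor-of-points descriptions already established, and then compare them directly on test schemes. Let $\rho \colon R \to S$ be an arbitrary test scheme. By Lemma~\ref{lem: pullback of universal target is mixing bundle} and its proof, a morphism $R \to G^\star\Xfrak$ over $S$ is equivalent to the data of a $T_\tau$-torsor $Q$ on $R$, a $T_\tau$-equivariant map $f \colon Q \to Y_\tau$ and a torsor isomorphism $\alpha \colon \rho^\star P \cong Q$, with the corresponding morphism $R \to (P \times Y_\tau)/T_\tau$ sending a local section $q$ of $Q$ to the class $[\alpha(q), f(q)]$.

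Next, according to Lemma~\ref{lem: 2-morphism induces isomorphism of pullback} the automorphism $\lambda_\dag$ acts on such triples by replacing $\alpha$ with $\alpha \circ \rho^\star \lambda^{-1}$. Viewing $\lambda \in T_\tau$ as the automorphism of $P$ (hence of $\rho^\star P$) given by the action of $T_\tau$ on itself, the new torsor isomorphism differs from $\alpha$ by precomposition with the action of $\lambda^{-1}$. Translating back to the mixing bundle picture, the new morphism to $(P \times Y_\tau)/T_\tau$ sends $q$ to the class $[\lambda^{-1} \cdot \alpha(q), f(q)]$, which by the defining equivalence relation of the quotient equals $[\alpha(q), \lambda \cdot f(q)]$. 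This is precisely the post-composition of the original morphism $R \to \Ycal_\tau$ with the fibrewise action of $\lambda$ on the $Y_\tau$-factor, i.e.\ the rubber action. Since the identification is natural in $R$, the two automorphisms of $G^\star \Xfrak$ agree.

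The entire argument is essentially a chase through the universal properties of fibre products and the mixing construction. The only step requiring real care is the bookkeeping for the direction of the $T_\tau$-action on the torsor $P$ and for the equivalence relation defining $(P \times Y_\tau)/T_\tau$, which together control the sign in the identity $[\lambda^{-1} p, y] = [p, \lambda \cdot y]$. I expect this sign-tracking to be the main (if minor) obstacle; everything else reduces to substitution into the definitions from Lemmas~\ref{lem: 2-morphism induces isomorphism of pullback} and \ref{lem: pullback of universal target is mixing bundle}.
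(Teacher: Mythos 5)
Your argument is correct and follows essentially the same route as the paper: both unravel $\lambda_\dag$ via the functor-of-points description of Lemma~\ref{lem: pullback of universal target is mixing bundle}, observe that it acts as $(\lambda^{-1},1)$ on $P \times Y_\tau$, and use the diagonal quotient relation to rewrite this as $(1,\lambda)$, i.e.\ the rubber action. The only blemish is a harmless notational slip in the direction of the torsor isomorphism $\alpha$ (you write $\alpha \colon \rho^\star P \cong Q$ but then apply $\alpha$ to sections of $Q$); the sign-tracking you flag as the delicate point is exactly the computation $(\lambda,\lambda)\circ(\lambda^{-1},1)=(1,\lambda)$ that the paper records.
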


\begin{proof} Fix a morphism $f \colon R \to G^\star \Xfrak$. We will use the notation established in the proof of Lemma~\ref{lem: pullback of universal target is mixing bundle} to describe the data giving $f$ and the equivalent morphism $f^\prime \colon R \to \Ycal_\tau$.

Take $\lambda \in T_\tau$. From the proof of Lemma~\ref{lem: 2-morphism induces isomorphism of pullback}, we see that that $\lambda_\dag\circ f$ is obtained by composing the isomorphism $Q \cong \rho^\star P$ with $\lambda^{-1} \colon \rho^\star P \cong \rho^\star P$. It follows that $\lambda_\dag \circ f^\prime$ is obtained by composing the $T_\tau$-equivariant morphism  $h \colon Q \to \rho^\star (P \times Y_\tau)$ with the automorphism:
\[ (\lambda^{-1},1) \colon \rho^\star (P \times Y_\tau) \to \rho^\star (P \times Y_\tau).\]
But the resulting morphism $R \to (P \times Y_\tau)/T_\tau = \Ycal_\tau$ is equal (i.e. is uniquely isomorphic) to the morphism obtained by composing $h$ with the alternative automorphism:
\begin{equation} \label{eqn: rubber action on mixing bundle} (1,\lambda) \colon \rho^\star (P \times Y_\tau) \to \rho^\star (P \times Y_\tau). \end{equation}
This is because the action $T_\tau \acts P \times Y_\tau$ which we quotient by is given by $(\lambda,\lambda)$, and $(\lambda,\lambda)\circ(\lambda^{-1},1)=(1,\lambda)$. But \eqref{eqn: rubber action on mixing bundle} is precisely the automorphism inducing the rubber action on $\Ycal_\tau$.\end{proof}

\begin{theorem} \label{thm: isomorphism of expanded maps in restricted case} An isomorphism of expanded maps
\begin{equation*} (C_1 \to S, P, f_1 \colon C_1 \to \Ycal_\tau) \cong (C_2 \to S, P, f_2 \colon C_2 \to \Ycal_\tau) \end{equation*}
is equivalent to the following data:
\begin{enumerate}
\item An isomorphism $\varphi \colon C_1 \to C_2$ over $S$.
\item An element $\lambda \in T_\tau$.
\end{enumerate}
This data is required to satisfy $\lambda \circ f_1 = f_2 \circ \varphi$ where $\lambda \colon \Ycal_\tau \cong \Ycal_\tau$ is the rubber action.\end{theorem}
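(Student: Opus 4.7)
The plan is to derive this theorem as a direct specialization of Lemma~\ref{lem: two notions of isomorphism are the same} to the case where the base scheme $S$, the morphism $G$ to $\Exp$, and the torsor $P$ are all shared between the two expanded maps. Taking $\psi = \id_S$ and $G_1 = G_2 = G$ in that lemma, an isomorphism reduces to the data of an $S$-isomorphism $\varphi \colon C_1 \to C_2$ together with a $2$-automorphism $\lambda \colon G \cong G$, subject to the compatibility square \eqref{diag: commuting rubber diagram}. Since $\psi = \id_S$ the canonical isomorphism $\Psi$ collapses to the identity, and the right-hand vertical in \eqref{diag: commuting rubber diagram} simplifies to $\lambda_\dag \colon G^\star \Xfrak \to G^\star \Xfrak$.

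The next step is to identify the group of $2$-automorphisms of $G$ with $T_\tau$. By hypothesis $G$ factors through the closed substack $\Bcal T_\tau \subseteq \Acal_\tau \subseteq \Exp$, whose isotropy group is $T_\tau$; globalising over $S$, this group of $2$-automorphisms is canonically identified with the group of automorphisms of the $T_\tau$-torsor $P$, i.e. with $T_\tau$ itself (acting by translation). Hence $\lambda$ becomes an element of $T_\tau$, as required for item (2) of the statement.

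Finally, I would invoke Lemma~\ref{lem: aut is rubber}, whose content is precisely that under the identification $G^\star \Xfrak = \Ycal_\tau$ the automorphism $\lambda_\dag$ agrees with the rubber action of $\lambda$ on $\Ycal_\tau$. The commutativity of \eqref{diag: commuting rubber diagram} then translates verbatim into the equation $\lambda \circ f_1 = f_2 \circ \varphi$, and the converse direction is obtained by reversing these identifications. There is no real obstacle here: the only point that requires care is matching the action of $T_\tau$ on $P$ (coming from $2$-automorphisms of $G$) with the rubber action on the mixing-construction presentation $\Ycal_\tau = (P \times Y_\tau)/T_\tau$, and this bookkeeping is precisely what Lemma~\ref{lem: aut is rubber} was set up to handle, so the theorem follows essentially by composition of the two lemmas.
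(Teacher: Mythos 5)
Your proposal is correct and follows exactly the paper's argument, which proves the theorem by combining Lemma~\ref{lem: two notions of isomorphism are the same} (specialised to a common base and common $G$) with Lemma~\ref{lem: aut is rubber}. The additional bookkeeping you spell out — identifying the $2$-automorphisms of $G$ with $T_\tau$ via the isotropy of $\Bcal T_\tau$ and collapsing $\Psi$ to the identity — is exactly what the paper leaves implicit.
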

\begin{proof} This follows by combining Lemma~\ref{lem: two notions of isomorphism are the same} and Lemma~\ref{lem: aut is rubber}.\end{proof}

\begin{remark} More generally, given expanded maps over different bases, we define in a similar way isomorphisms covering a given base isomorphism $\psi \colon S_1 \to S_2$. In this case, $\lambda$ is a choice of isomorphism $P_1 \cong \psi^\star P_2$, and the set of these forms a torsor over the rubber torus $T_\tau$.\end{remark}

\begin{remark} \label{rmk: can also do DT} Similar results apply to the logarithmic Donaldson--Thomas moduli spaces constructed in \cite[\S 4]{MR20}. The rubber torus is not discussed explicitly there, though it can be found lurking whenever the categorical equivalence between Artin fans and cone complexes is invoked. Our description of the action in terms of tropical position maps is new.\end{remark}

\subsection{Stability} \label{sec: stability}
Stability for expanded maps is defined analogously to stability for expanded ideal sheaves \cite[\S\S4.1--4.2]{MR20}. Consider an expanded map over a closed point. By Lemma~\ref{lem: expanded map to static target as map to mixing bundle} we may (after choosing a trivialisation of the torsor $P$ over $\Speck$) represent this as
\[ (C \to \Speck, f \colon C \to Y_\tau) \]
\noindent for a unique cone $\tau \leq \T$. Recall from \S \ref{sec: tropical target} that over an interior point $\f \in |\tau|$ there is a polyhedral subdivision
\[ \Upsilon_\f \to \mathsf{G}_\f \]
of the fibre $\mathsf{G}_\f$ of the universal embedded $1$-complex, given by introducing additional bivalent vertices. These are referred to as \textbf{tube vertices}, and the corresponding components of $Y_\tau$ are referred to as \textbf{tube components}. Here it is important that we use the non-complete target $\Upsilon$.

\begin{definition} Consider an irreducible component $Y_v \subseteq Y_\tau$ corresponding to a bivalent vertex $v\leq\Upsilon_{\f}$ (which may or may not be a tube vertex). An expanded map to $Y_\tau$ is a \textbf{tube along $Y_v$} if and only if 
\[ f^{-1}(Y_v) \subseteq C \]
is a union of strictly semistable components mapping non-constantly into the fibres of $Y_v \to X_v$ with all components defining the same slope direction.

The slope direction associated to a semistable component of the source curve is defined as follows. The corresponding vertex of the tropical source curve is bivalent, and by the balancing condition in the toric fibres the slopes along the two adjacent edges must be equal. The slope direction is the linear subspace spanned by either of these slopes. It is a one-dimensional linear subspace of $N_{\sigma_v}\otimes \RR$.\end{definition}

\begin{definition}\label{def:stability} An expanded map $(C \to \Speck, f\colon C \to Y_\tau)$ is \textbf{stable} if it meets every stratum of $Y_\tau$, is stable in the usual sense, and is a tube exactly along the tube components of the target.\end{definition}

\begin{remark} Given a non-tube component $Y_v \subseteq Y_\tau$ the last condition states that either:
 \begin{enumerate}
 \item There exists an irreducible component $C_0 \subseteq C$ mapping to $Y_v$ such that the composite $C_0 \to Y_v \to X_v$ is stable in the usual sense.
 \item If every component mapping to $Y_v$ is strictly semistable and contained in a fibre of $Y_v \to X_v$, then there must exist two components $C_1,C_2 \subseteq C$ with linearly independent slopes $m_1, m_2 \in N_{\sigma_v}$.
\end{enumerate}
\end{remark}
%This is an alternative way to say that the curve is allowed to be a \emph{tube} precisely along tube components of the expansion (see \cite[\S 4.1]{MR20} definition ): if $Y_v\subset Y_{\tau}$ is a tube components, then it is a $\PP^1$-bundle over some strata of $X_{\tau}\to X$, where the latter is the log modification of $X$ obtained flattening the projection $Y_{\tau}\to X$. We say that $C$ is a tube along $Y_v$ if all the irreducible components of $C$ mapping to $Y_v$ are strictly semi-stable and their image is contained in a union of fibers, i.e. they all have same  \emph{slope}.
We will show that this notion of stability implies, but is not equivalent to, finiteness of automorphisms. We begin with a preparatory lemma.
\begin{lemma} \label{lem: rubber acts nontrivially} The product of all tropical position maps
 \[ \prod_v \varphi_v \colon T_{\tau} \to \prod_v T_{\sigma_v} \]
 is injective, so that each element of $T_\tau$ acts nontrivially on at least one irreducible component of the expanded target.\end{lemma}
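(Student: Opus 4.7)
The plan is to show that $\prod_v \varphi_v$ is injective on the cone $\tau$, equivalently on the $\RR$-linear extension $N_\tau \otimes \RR \to \prod_v N_{\sigma_v} \otimes \RR$, and then upgrade this to the lattice and torus levels. The key insight is that the collective data $(\varphi_v(\mathrm{f}))_v$ records the positions of all vertices of the polyhedral complex $\p^{-1}(\mathrm{f})$, which suffices to reconstruct both the complex and the parameter $\mathrm{f}$ itself.

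First I would take $\mathrm{f} \in \tau$ with $\varphi_v(\mathrm{f}) = 0$ for every $v$, and observe that by Definition~\ref{def: position map} every vertex of $\p^{-1}(\mathrm{f}) \hookrightarrow \Sigma$ sits at the origin. Any polyhedron whose $0$-faces all coincide at a single point can have no bounded $1$-face and must therefore be a cone, so $\p^{-1}(\mathrm{f})$ is a cone complex. On the other hand, the combinatorial type of $\p^{-1}(\mathrm{f})$ is constant on the interior of the minimal face $\tau' \leq \tau$ through $\mathrm{f}$, and for $\dim \tau' > 0$ this type must carry a bounded edge of positive length, contradicting the cone-complex conclusion. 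Hence $\tau' = 0$ and $\mathrm{f} = 0$. Equivalently, one can appeal to the fact recalled in \S\ref{sec: tropical target} that distinct points of $|\T|$ correspond to distinct embedded $1$-complexes in $\Sigma$: the coincidence $\p^{-1}(\mathrm{f}) = \p^{-1}(0)$ then yields the same conclusion. Since $\tau$ spans $N_\tau \otimes \RR$, injectivity on the cone upgrades to injectivity of the induced lattice map $N_\tau \to \prod_v N_{\sigma_v}$.

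To promote this to injectivity of the torus morphism (which requires trivial kernel, not merely finite kernel) one must additionally verify that the image of $N_\tau$ is saturated in $\prod_v N_{\sigma_v}$. For each bounded edge $e$ with endpoints $v_1, v_2$ and primitive slope $m_e$, the length functional $\ell_e \colon N_\tau \to \Z$ is recovered as the coefficient along $m_e$ of $\varphi_{v_2} - \varphi_{v_1}$, and therefore already lies in the subgroup of $N_\tau^\vee$ generated by coordinate functions of $\prod_v \varphi_v$; analogous statements cover vertex-position moduli arising from vertices sliding along boundary strata. The main obstacle I expect is verifying that these edge-length and position characters exhaust the full dual lattice $N_\tau^\vee$, which requires careful bookkeeping against the barycentric and automorphism-equivariant subdivisions used in the construction of $\T$. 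Once this is in place, injectivity of the torus map follows from the geometric argument above.
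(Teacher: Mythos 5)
Your strategy---recover the edge lengths of the embedded $1$-complex from the vertex positions $\varphi_v$ and reduce to the injectivity built into the construction of $\T$---is the right one, and it supplies far more detail than the paper, whose proof is a one-sentence appeal to the construction of $\Upsilon \to \T$ via embedded $1$-complexes. But as written the argument has two genuine gaps. The first is the step ``Since $\tau$ spans $N_\tau\otimes\RR$, injectivity on the cone upgrades to injectivity of the induced lattice map.'' What your first paragraph actually establishes is only that the fibre of $\prod_v\varphi_v$ over $0$ meets the \emph{cone} $\tau$ in the origin; this implies neither injectivity on the cone nor injectivity of the linear extension, even though $\tau$ spans. (The map $(x,y)\mapsto x+y$ on $\RR^2_{\geq 0}$ sends no nonzero point of the cone to $0$, yet its linear extension has a one-dimensional kernel and it is not even injective on the cone.) The correct route is the identity you only invoke later: for each bounded edge $e$ with endpoints $v_1,v_2$ and primitive slope $m_e$ one has $\varphi_{v_2}-\varphi_{v_1}=\ell_e\cdot m_e$ as linear maps on $N_\tau\otimes\RR$, so every edge-length functional $\ell_e$ factors through $\prod_v\varphi_v$; since $\tau$ is a cone of a subdivision of $\mathbb{X}_{\mathsf{G}}$, the edge-length map $(\ell_e)_e\colon N_\tau\otimes\RR\to N_{\mathbb{X}_{\mathsf{G}}}\otimes\RR$ is injective, and linear injectivity of $\prod_v\varphi_v$ follows. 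Your geometric observation about all vertices sitting at the origin is the value of this computation at a single fibre and does not substitute for it.

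The second gap is the one you flag yourself: the saturation of the image of $N_\tau$ in $\prod_v N_{\sigma_v}$, equivalently surjectivity of $\prod_v N_{\sigma_v}^\vee\to N_\tau^\vee$, is declared ``the main obstacle I expect'' and left open, so the proposal as it stands does not establish injectivity of the torus map, only finiteness of its kernel. The step does close, and no bookkeeping against the barycentric or automorphism-equivariant subdivisions is required: primitivity of $m_e$ provides an integral functional $m_e^\ast$ with $\langle m_e^\ast,m_e\rangle=1$, so $\ell_e=\langle m_e^\ast,\varphi_{v_2}-\varphi_{v_1}\rangle$ lies in the image of the dual map; and Definition~\ref{def: subdivision} requires every cone of a subdivision to have saturated lattice in the ambient lattice, so $N_\tau$ is saturated in the edge-length lattice $N_{\mathbb{X}_{\mathsf{G}}}$ and the functionals $\ell_e$ already generate $N_\tau^\vee$. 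Both repairs are short, but until they are made the proposal does not prove the statement.
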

\begin{proof} This is an immediate consequence of the construction of the universal tropical target $\Upsilon \to \T$ using embedded $1$-complexes.\end{proof}

\begin{theorem} A stable expanded map has finite automorphism group. \end{theorem}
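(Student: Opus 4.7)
My plan is to use Theorem~\ref{thm: isomorphism of expanded maps in restricted case} to present an automorphism of $(C \to \Speck, f \colon C \to Y_\tau)$ as a pair $(\varphi, \lambda)$ with $\varphi \in \operatorname{Aut}(C)$ over $\Speck$ and $\lambda \in T_\tau$ satisfying $\lambda \circ f = f \circ \varphi$. The projection $\operatorname{Aut}(C,f) \to T_\tau$ has kernel equal to the ordinary stable-map automorphism group $\{\varphi : f \circ \varphi = f\}$, which is finite by ``stable in the usual sense''. It therefore suffices to show that the image in $T_\tau$ is finite; since this image is a closed subgroup of a torus, this reduces to ruling out a nontrivial one-parameter subgroup.

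Suppose for contradiction that a nontrivial 1-PS $\lambda_t$ lies in the image. After passing to a finite cover of $\Gm$ to absorb the finite kernel, I would lift to a 1-PS $(\varphi_t, \lambda_t)$ in $\operatorname{Aut}(C,f)$. Lemma~\ref{lem: rubber acts nontrivially} produces a vertex $v_0$ with $\varphi_{v_0}(\lambda_t)$ a nontrivial 1-PS of $T_{\sigma_{v_0}}$. A preliminary reduction arranges that $v_0$ is a non-tube vertex: in the construction of $\T$ via embedded $1$-complexes, tube vertices do not contribute independent moduli, their positions being fixed affine combinations of the positions of the non-tube endpoints of the edge they subdivide. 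Hence linearity of $\varphi_v$ in $\tau$ forces nontriviality of $\varphi_{v_0}(\lambda_t)$ for a tube $v_0$ to propagate to a non-tube endpoint of the containing edge.

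The heart of the argument is a local analysis at a non-tube $v_0$. Since $f$ meets every stratum there exists a component $C_* \subseteq C$ with $f(C_*) \cap Y_{v_0}^\circ \neq \emptyset$, and connectivity of the 1-PS forces $\varphi_t(C_*) = C_*$, so that $\varphi_t|_{C_*}$ is a 1-PS in $\operatorname{Aut}(C_*)$. I would split into cases. If $C_* \to Y_{v_0} \to X_{v_0}$ is non-constant, then the rubber acts trivially on $X_{v_0}$, so $\varphi_t|_{C_*}$ stabilises a non-constant map from an irreducible curve; its stabiliser is finite, forcing $\varphi_t|_{C_*} = \mathrm{id}$, and freeness of the $T_{\sigma_{v_0}}$-action on $Y_{v_0}^\circ$ (Theorem~\ref{prop: rubber action toric locally closed}) then gives $\varphi_{v_0}(\lambda_t) = 1$, a contradiction. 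If the projection to $X_{v_0}$ is constant and $f|_{C_*}$ is also constant, the same freeness argument applies once stability in the usual sense rules out positive-dimensional $\operatorname{Aut}(C_*)$. The delicate remaining case is $f|_{C_*}$ non-constant into a single fibre $F$ of $Y_{v_0} \to X_{v_0}$: tropical balancing forces $f|_{C_*}$ to be $\Gm$-equivariant with image a torus-invariant curve in $F$, and such a curve meets the dense orbit $F^\circ$ only when $\dim F = 1$, so $\dim \sigma_{v_0} = 1$. Since $v_0$ is non-tube, the stability alternatives from the remark after Definition~\ref{def:stability} both fail: condition~(2) requires two linearly independent slopes in a rank-one lattice, while condition~(1) furnishes another component at $v_0$ to which the non-constant case applies, again producing a contradiction.

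I expect the main obstacle to be the tube reduction step together with the delicate final subcase of the local analysis. Establishing that the construction of $\T$ prevents tube vertex positions from acquiring independent moduli is a structural fact about the embedded $1$-complex formalism of \cite{MR20} that needs to be extracted carefully. Once this is secured, the remaining case analysis is a combination of the standard stable-map argument with the principal bundle structure of $Y_v^\circ \to X_v^\circ$ and the tropical balancing of source slopes, mirroring the smooth-divisor theory of \cite{Li1}.
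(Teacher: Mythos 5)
Your overall strategy matches the paper's: reduce to a one-parameter subgroup $T_\rho \subseteq T_\tau$ via usual stability, invoke Lemma~\ref{lem: rubber acts nontrivially} to find a component on which $T_\rho$ acts nontrivially, upgrade this to a \emph{non-tube} component using the structure of the universal expansion, and then derive a contradiction with the tube clause of Definition~\ref{def:stability}. The first three steps are essentially the paper's argument (the paper phrases the tube reduction by restricting the embedded $1$-complex, rather than the tube subdivision of it, to $\rho$; this sidesteps the affine-combination bookkeeping you propose).

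However, your final case contains a genuine error. You claim that a $1$-PS-equivariant non-constant map $f|_{C_*}$ into a fibre $F$ of $Y_{v_0}\to X_{v_0}$ has torus-invariant image, and that such a curve meets the dense orbit $F^\circ$ only when $\dim F=1$. The image is invariant only under the \emph{one-parameter} subgroup $\varphi_{v_0}\otimes\Gm(T_\rho)\subseteq T_{\sigma_{v_0}}$, not under the full torus: its intersection with $F^\circ$ is a coset of the image of that $1$-PS, which is a perfectly good curve meeting the dense orbit for any $\dim F\geq 1$. So the deduction $\dim\sigma_{v_0}=1$ fails, and with it your dismissal of stability alternative~(2) as ``two linearly independent slopes in a rank-one lattice'' --- in rank $\geq 2$ that alternative can genuinely occur and you have no argument against it. The paper closes exactly this gap differently: equivariance forces the slope direction of \emph{every} strictly semistable component mapping into a fibre of $Y_{v_0}^\circ\to X_{v_0}^\circ$ to be parallel to the weight $\varphi_{v_0}(\rho)$, since that is the only one-dimensional subtorus direction preserved by $T_\rho$; hence all such components share a single slope direction, the map is a tube along the non-tube component $Y_{v_0}$, and stability is violated. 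Replacing your $\dim F=1$ step with this parallelism-of-slopes observation repairs the proof and is precisely where the higher-rank tube condition enters.
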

\begin{proof} Automorphisms are characterised by Theorem~\ref{thm: isomorphism of expanded maps in restricted case}. Assume for a contradiction that the automorphism group is infinite. Since the expanded map is stable in the usual sense, there must exist a one-parameter subgroup of the rubber torus
\[ T_\rho \subseteq T_\tau \]
such that for every $\lambda \in T_\rho$ there is an automorphism $\varphi_\lambda$ of $C$ with $\lambda \circ f = f \circ \varphi_\lambda$. Here $\rho$ is a one-dimensional conical subset:
\[ \rho \subseteq \tau .\]
By Lemma~\ref{lem: rubber acts nontrivially}, $T_\rho$ acts nontrivially on at least one target component. We claim that in fact it acts nontrivially on at least one non-tube component. Indeed, tube components arise from a subdivision $\Upsilon \to \mathsf{G}$ which does not alter the base cone $\tau$. Pulling back along $\rho \subseteq \tau$ produces an embedded $1$-complex parametrised by $\rho$, which by the construction of $\T$ contain a vertex $v$ not mapping to $0 \leq \Sigma$. It follows that $T_\rho$ acts non-trivially on $Y_v$. In fact, the action is free on the fibres of $Y_v^\circ \to X_v^\circ$.
 
From $\lambda \circ f = f \circ \varphi_\lambda$ it follows that every irreducible component of $C$ mapping to $Y_v$ must map its interior (i.e. the complement of all special points) into a fibre of the principal torus bundle $Y_v^\circ \to X_v^\circ$. Moreover every such component must have infinitely many automorphisms $\varphi_\lambda$, and so must be strictly semistable.
 
 Finally, if there are two such semistable components with linearly independent slopes, then we cannot have $\lambda \circ f = f \circ \varphi_\lambda$ since there is only a single one-dimensional subtorus bundle of $Y_v^\circ \to X_v^\circ$ which is preserved by $T_\rho$.
 
 We conclude that the expanded map is a tube along the non-tube component $Y_v$ of the target, contradicting stability.\end{proof}

\begin{remark} The previous argument mirrors the proof of \cite[Theorem~4.5.1]{MR20}. In particular, the argument does not require the explicit description of the rubber torus or its action in terms of tropical position maps. \end{remark}

 \begin{example}\label{ex:tubevertices} 
We conclude with an example to show that finiteness of automorphisms does not imply stability. Consider the following embedded $1$-complex:
 
\begin{center}
	\begin{minipage}[t]{0.7\textwidth}
		\centering
		\begin{tikzpicture} [scale=.60]
		\tikzstyle{every node}=[font=\normalsize]
		\tikzset{arrow/.style={latex-latex}}
		
		\tikzset{cross/.style={cross out, draw, thick,
         minimum size=2*(#1-\pgflinewidth), 
         inner sep=1.2pt, outer sep=1.2pt}}

%coordinate polyhedral subdivision
\coordinate (OP) at (0,0,0);
\coordinate (BP) at (6,0,0);
   \coordinate (CP) at (0,6,0);
 \coordinate (v1P) at (0,3,0);
      \coordinate (v2P) at (3,3,0);
            \coordinate (v2Plab) at (3.3,3,0);
      \coordinate (vl1) at (3,6,0);
      \coordinate (vl2) at (6,3,0);
    \coordinate (vl3) at (6,6,0);

%polyhedral sub

% Shaded regions
\fill[orange!20] (0.2,0.4) -- (2.8,3) -- (2.8,6) -- (0.2,6) -- (0.2,0.4);
\fill [orange!20] (0.4,0.2) -- (6,0.2) -- (6,2.8) -- (3,2.8) -- (0.4,0.2);
\fill [orange!20] (3.2,3.2) -- (6,3.2) -- (6,6) -- (3.2,6) -- (3.2,3.2);

\draw [decorate,decoration={brace,amplitude=5pt},xshift=0.4pt,yshift=-0.4pt] ([c]OP)--([c]v1P) node[black,midway,xshift=-0.3cm,yshift=0 cm] {\small{$e$}};
\draw [decorate,decoration={brace,amplitude=5pt,mirror},xshift=0.4pt,yshift=-0.4pt] ([c]OP)--([c]v2P) node[black,midway,xshift=0.22cm,yshift=-0.2 cm] {\small{$e$}};

\begin{scope}[every coordinate/.style={shift={(0,0,0)}}]
\draw[->] ([c]OP)--([c]BP);
\draw [->]([c]OP)--([c]CP);

\draw[->] ([c]v2P)--([c]vl1);
\draw[->] ([c]v2P)--([c]vl2);
\draw ([c]v2P)--([c]OP);
   \fill[blue]([c]OP) circle (3pt);
       \fill[blue] ([c]v1P) circle (3pt);
       \fill[blue] ([c]v2P) circle (3pt);
       
       \node at ([c]OP) [below] {\small{$v_0$}};	
     \node at ([c]v1P) [left] {\small{$v_1$}};
     \node at ([c]v2Plab) [below] {\small{$v_2$}};
     
      \node at ([c]BP) [right] {\small{$\ell_1$}};
          \node at ([c]CP) [above] {\small{$\ell_2$}};

%edge lenghts
\end{scope}

%tropical subdivision

\end{tikzpicture}
	\end{minipage}
\end{center}
Here the orange shaded regions are not included in the conical or polyhedral subdivisions. Note that there is a single tropical parameter $e$, even though the cone $\mathbb{X}_{\mathsf{G}}$ associated to this embedded $1$-complex has two tropical parameters. This expanded target appears in the universal tropical expansion $\Upsilon \to \T$ whenever $\mathbb{X}_{\mathsf{G}}$ is subdivided along its diagonal during the construction of $\T$.

Now consider a map to this expanded target which intersects every stratum and is a tube along the non-tube component $Y_{v_1}$. This map has finitely many automorphisms because the rubber torus $\Gm \langle e \rangle$ acts on $Y_{v_1}$ and $Y_{v_2}$ simultaneously, and the stability of the source curve over $Y_{v_2}$ is sufficient to guarantee finiteness of automorphisms.

On the other hand, the map is not stable. We conclude that stability is strictly stronger than finiteness of automorphisms. In this example we see that, without stability, the moduli space is not separated: if the map above arises as the limit of a one-parameter family, we can produce a different limit by simply collapsing the component $Y_{v_1}$ in the target and its corresponding tube component in the source. The interaction between stability and separatedness is discussed in \cite[Theorem~4.6.1]{MR20}. For smooth pairs this subtlety does not arise: stability and finiteness of automorphisms are equivalent \cite[Lemma 3.2]{Li1}. \end{example}

\section{The boundary of moduli} \label{sec: boundary of moduli}

\noindent We now combine the results of the previous two sections to describe the locally-closed boundary strata in the space of expanded stable maps (Theorem~\ref{thm: description of locally closed stratum}). We then discuss the remaining obstacles to a recursive description of the \emph{closed} boundary strata. Finally, we show how our perspective can be used to define higher-rank rubber spaces.

\subsection{Locally-closed strata}\label{sec: boundary of moduli locally-closed strata} Denote by $\Kup_\Gamma(X|D)$ the moduli space of expanded stable maps to $(X|D)$ with fixed discrete data $\Gamma$. As discussed, this depends on a choice of universal tropical expansion, which we suppress from the notation. There is a natural morphism:
\begin{equation*} \pi \colon \Kup_{\Gamma}(X|D) \to \Exp. \end{equation*}
Each cone $\tau  \leq \T$ corresponds to a locally-closed stratum $\Bcal T_\tau \subseteq \Exp$. We consider its preimage in $\Kup_\Gamma(X|D)$:
\begin{equation*} \Kup_\tau^\circ \colonequals \pi^{-1}(\Bcal T_\tau) \subseteq \Kup_\Gamma(X|D).\end{equation*}
The locus $\Kup_\tau^\circ \subseteq \Kup_\Gamma(X|D)$ consists precisely of those expanded stable maps $(C \to S, G, f)$ for which $G$ factors through $\Bcal T_\tau \subseteq \Exp$. This forms a locally-closed substack, over which the expanded target is static. We will refer to it as a locally-closed boundary stratum, although strictly speaking it is a union of finitely many locally-closed boundary strata, indexed by combinatorial types of tropical stable maps with the same image.

We let $\mathscr{Y}_\tau \to \Bcal T_\tau$ denote the restriction of the universal target to $\Bcal T_\tau$. Using notation from \S \ref{sec: constructing universal target}, we note that this differs from $\mathfrak{X}_\tau \to \Acal_\tau$, which is the restriction of the universal target to the \emph{open} substack $\Acal_\tau \subseteq \Exp$. In fact $\mathscr{Y}_\tau$ is the central fibre of $\Xfrak_\tau$, and from the identification $\Xfrak_\tau=[\Xcal_{\tau}/T_\tau]$ we obtain
\begin{equation*} \mathscr{Y}_\tau = [Y_\tau / T_\tau] \to \Bcal T_\tau \end{equation*}
where $Y_\tau \subseteq \Xcal_{\tau}$ is the central fibre of the versal target and $T_\tau \acts Y_\tau$ is the rubber action described in Theorem~\ref{prop: rubber action general}. The following definitions give rise to a moduli space of expanded rubber maps to $Y_\tau$. 
%The target $Y_\tau$ is a single expansion, not a family of expansions over some base; this simplification occurs because we work over a single locally-closed boundary stratum, ensuring that the expanded target is static.
\begin{definition}\label{def: rubber map}
An expanded rubber map to $Y_\tau$ (over a base scheme $S$) consists of a $T_\tau$-torsor $P$ over $S$ and a commutative diagram of schemes
\begin{equation*}
\begin{tikzcd}
C \ar[d,"\pi" left] \ar[r,"f"] & \Ycal_\tau \ar[ld]  \\
S 
\end{tikzcd}
\end{equation*}
where $\Ycal_\tau = (P \times Y_\tau)/T_\tau$ is the associated $Y_\tau$-bundle, $C \to S$ is a family of prestable curves and $f$ is predeformable. \textbf{Stability} is defined in the same way as it is for expanded maps to $(X|D)$, see \S \ref{sec: stability} for details.
\end{definition}
\begin{definition} \label{def: isomorphism of rubber maps} An isomorphism between expanded rubber maps
\begin{equation*} (C_1 \to S_1, f_1) \cong (C_2 \to S_2, f_2) \end{equation*}
consists of the following data:
\begin{enumerate}
\item An isomorphism of source curves, i.e. a commutative square in which both $\varphi$ and $\psi$ are isomorphisms:
\bcd
C_1 \ar[d,"\pi_1" left] \ar[r,"\varphi"] & C_2 \ar[d,"\pi_2"] \\
S_1 \ar[r,"\psi"] & S_2.
\ecd
\item An element $\lambda \in T_\tau$ of the rubber torus.
\end{enumerate}	
This data is required to satisfy $\lambda \circ f_1 = f_2 \circ \varphi$, where the action $T_\tau \acts \Ycal_\tau$ is induced by the trivial action on $P$ and the rubber action on $Y_\tau$ (as in \S \ref{sec: static target}).
\end{definition}
\begin{theorem} \label{thm: description of locally closed stratum} The stratum $\Kup_{\tau}^\circ \subseteq \Kup_{\Gamma}(X|D)$ is isomorphic to the moduli space of expanded rubber maps to $Y_\tau$.\end{theorem}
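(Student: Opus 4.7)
The plan is to establish the equivalence as categories fibred in groupoids over schemes, which upgrades to an isomorphism of stacks. Almost all of the work has already been done in \S \ref{sec: pullback of universal target} and \S \ref{sec: static target}; what remains is a direct assembly of the previously established pieces.

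First I would unpack an object of $\Kup_\tau^\circ$ over a test scheme $S$. By definition this is an expanded stable map $(C \to S, G, f)$ whose classifying morphism $G \colon S \to \Exp$ factors through $\Bcal T_\tau \subseteq \Exp$. Such a factorisation corresponds uniquely to a $T_\tau$-torsor $P \to S$. Lemma \ref{lem: expanded map to static target as map to mixing bundle} then repackages the pair $(G,f)$ as the data $(P, f \colon C \to \Ycal_\tau)$ with $\Ycal_\tau = (P \times Y_\tau)/T_\tau$, which is precisely an expanded rubber map to $Y_\tau$ in the sense of Definition \ref{def: rubber map}. Predeformability transfers across this equivalence because the strata of $\Ycal_\tau$ are pulled back from those of $\Xfrak$ along a smooth cover, and the strata of $Y_\tau$ under the natural smooth map $Y_\tau \to \Ycal_\tau$ (after trivialising $P$) coincide with the strata defined by the toric structure.

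Second I would match isomorphisms. For isomorphisms over a fixed base $S$ and fixed torsor $P$, Theorem \ref{thm: isomorphism of expanded maps in restricted case} gives exactly the data demanded by Definition \ref{def: isomorphism of rubber maps}: an isomorphism $\varphi \colon C_1 \to C_2$ over $S$ and an element $\lambda \in T_\tau$ satisfying $\lambda \circ f_1 = f_2 \circ \varphi$. For morphisms covering a base change $\psi \colon S_1 \to S_2$, the remark following that theorem extends the picture: a $2$-isomorphism $G_1 \cong G_2 \circ \psi$ corresponds to an isomorphism $P_1 \cong \psi^\star P_2$ of torsors, and the collection of such isomorphisms forms a torsor over $T_\tau$, matching Definition \ref{def: isomorphism of rubber maps} after the choice of a reference trivialisation. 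Compatibility of the stability notions is tautological, since Definition \ref{def: rubber map} imports stability directly from \S \ref{sec: stability} via the fibrewise description of $f$.

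The main subtlety, such as it is, lies in the bookkeeping for the second step: one must confirm that the equivalences on objects and on morphisms assemble into a natural transformation of stacks, so that pullback along $\psi \colon S_1 \to S_2$ is sent to pullback of the associated torsors and maps. This naturality follows from the functoriality of the mixing construction in Lemma \ref{lem: pullback of universal target is mixing bundle}, but careful tracking of how $P$ and the rubber element $\lambda$ transform is required. I do not anticipate a genuine geometric obstacle; once the dictionary between $(G,F,\alpha)$ and $(P, f)$ is set up, both directions of the equivalence are visibly mutually inverse.
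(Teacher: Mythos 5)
Your proposal is correct and follows essentially the same route as the paper: identify $\Kup_\tau^\circ$ as the locus where $G$ factors through $\Bcal T_\tau$, repackage objects via Lemma~\ref{lem: expanded map to static target as map to mixing bundle}, and match isomorphisms via Theorem~\ref{thm: isomorphism of expanded maps in restricted case}. The paper's proof is just a terser version of this assembly, treating the bookkeeping you describe as tautological.
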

\begin{proof} By definition $\Kup_\tau^\circ$ consists precisely of expanded stable maps for which the base morphism $G \colon S \to \Exp$ factors through $\Bcal T_\tau \subseteq \Exp$. The result then follows from Theorem~\ref{thm: isomorphism of expanded maps in restricted case}. \end{proof}
 
\begin{remark} Note that $Y_\tau$ is almost always reducible. In this case, the source curve of an expanded rubber map to $Y_\tau$ must also be reducible. As already mentioned, $\Kup_\tau^\circ$ is a union of finitely many locally-closed boundary strata, indexed by combinatorial types of tropical stable maps with the same image. Correspondingly, the space of expanded rubber maps to $Y_\tau$ with discrete data $\Gamma$ also decomposes into a finite union of strata. Theorem~\ref{thm: description of locally closed stratum} can be improved to describe these strata individually, by enhancing the discrete data $\Gamma$ to include the combinatorial type of the source curve. This is similar to the construction of spaces of stable maps indexed by stable marked graphs \cite{BehrendManin}.\end{remark}

\subsection{Recursive description of the boundary} \label{sec: recursive description of boundary} \label{sec: obstacles for recursive description} Theorem~\ref{thm: description of locally closed stratum} provides the first step towards a recursive description of the boundary of the moduli space of expanded stable maps. Two major tasks remain:

\begin{enumerate}
\item Describe the space of expanded rubber maps to $Y_\tau$ as a fibre product of spaces with smaller discrete data. Here a fundamental complication arises which does not occur for smooth pairs: the rubber torus can act nontrivially on a divisor joining two components of the expanded target.

It follows that the rubber torus cannot be split into a direct sum of tori such that each factor acts nontrivially on only a single irreducible component of $Y_\tau$. Consequently, it is not possible to express the stratum $\Kup_\tau^\circ$ as a fibre product of spaces of rubber maps to the individual irreducible components of $Y_\tau$.\medskip

\item There are certain simple target expansions for which the rubber torus happens to act trivially on the join divisors. In these cases, Theorem~\ref{thm: description of locally closed stratum} can be used to give a recursive description of the locally-closed stratum $\Kup_\tau^\circ$.

Even here, however, describing the \emph{closed} stratum $\Kup_\tau$ is considerably more difficult, because further target degenerations in the different factors of the fibre product may not be compatible. A similar problem arises in the expanded version of the degeneration formula, and its solution requires an effective description of the relative diagonal of the expanded join divisor \cite[\S\S 5-6]{RangExpansions}.
\end{enumerate}
We mention some speculative strategies for addressing these issues. For (1), a promising approach is to use rigidification \cite[\S 1.5.3]{MaulikPandharipande} to remove the rubber torus, thus circumventing the difficulties caused by its action on the join divisors. For (2), a potential technique is to pass to smaller birational models of the expanded moduli spaces (e.g. quasimap spaces), guaranteeing simpler geometry of the expanded join divisor.

In all cases, a necessary prerequisite will be a choice of ideal coordinate system on the rubber torus: see the discussion in Example~\ref{example: smooth divisor}.

\subsection{Expanded rubber spaces} \label{sec: rubber spaces} We now use the approach developed in \S \ref{sec: 2-morphisms} to define spaces of expanded maps to higher-rank rubber targets. The virtual fundamental classes of these spaces will produce toric contact cycles \cite{DhruvProduct,HPS,MolchoRanganathan, HolmesSchwarz}.

Consider a normal crossings pair $(X|D)$ together with a torus action $T \acts (X|D)$, i.e. an action $T \acts X$ which sends $D$ to itself. The basic example is a toric variety (or more generally, a toric variety bundle) together with the action of the dense torus, or a subtorus thereof.

Construct as in \S \ref{sec: constructing universal target} a universal tropical expansion $\Upsilon \to \T$ for $(X|D)$, with associated universal target expansion $\Xfrak \to \Exp$. Recall that $\Exp$ is covered by open substacks $\Acal_\tau$ for $\tau \leq \T$ over which the universal target is the quotient of the versal target by the rubber action:
\begin{equation*} \Xfrak_\tau = [\Xcal_{\tau}/T_\tau] \to [U_\tau/T_\tau] = \Acal_\tau.\end{equation*}
Recall that $\Xcal_{\tau}$ is a logarithmic modification of $X \times U_\tau$. The action $T \acts X \times U_\tau$ (given by the trivial action on the second factor) lifts to an action $T \acts \Xcal_{\tau}$ (covering the trivial action on $U_\tau$). We define the rubber target as the quotient of the versal target by $T \times T_\tau$:
\begin{equation*} \Rub(\Xfrak_\tau,T) \colonequals [\Xcal_{\tau}/(T \times T_\tau)] \to [U_\tau/(T \times T_\tau)] = \Bcal T \times \Acal_\tau \equalscolon \Rub(\Acal_\tau,T).\end{equation*}
These glue along the face inclusions of $\T$ to give a \textbf{rubber universal target}:
\begin{equation} \operatorname{Rub}(\Xfrak,T) \to \Bcal T \times \Exp = \Rub(\Exp,T).\end{equation}
Note that the projection morphism is representable. Using this, we define \textbf{expanded rubber maps} by repeating Definitions~\ref{def: expanded map} and \ref{def: isomorphisms of expanded maps 1}, replacing the universal target $\Xfrak \to \Exp$ by the rubber universal target $\Rub(\Xfrak,T) \to \Rub(\Exp,T)$.

The arguments in \S \ref{sec: pullback of universal target}, \S\ref{sec: static target} and \S\ref{sec: boundary of moduli locally-closed strata} then apply \emph{mutatis mutandis}, identifying rubber maps with maps to the pullback of the universal target, and describing the locally-closed strata of the resulting moduli space. In particular, on the locus where the target does not expand we have $\Rub(\Acal_\tau,T)=\Bcal T$ and a rubber map consists of a $2$-commutative diagram:
\bcd
C \ar[d] \ar[r] & {[X/T]} \ar[d] \\
S \ar[r] & \Bcal T.
\ecd
The $2$-commutativity ensures that $C \to [X/T]$ lifts fibrewise to a map $C \to X$ (see Remark~\ref{rmk: map to quotient stack lifts} and Lemma~\ref{lem: expanded map to static target as map to mixing bundle}), while the isotropy in $\Bcal T$ implies that maps differing by the rubber action are identified. As we move to the boundary of the moduli space, we must contend with both the global rubber torus $T$ and the local rubber torus $T_\tau$ associated to the given target expansion.

In situations where a recursive description of the boundary of expanded stable maps is possible (see \S \ref{sec: recursive description of boundary} above), the factors appearing in the fibre product will be spaces of expanded rubber maps.

\section{Examples} \label{sec: examples}
\noindent Throughout we will write $\RR_{\geq 0}^k \langle e_1, \ldots, e_k \rangle$ to indicate a cone coordinatised by $e_1,\ldots,e_k$.

\begin{example}\label{ex:rank1} \label{example: rank 1} \label{example: smooth divisor} \label{example: rank one} We begin with the rank one case. Let $(X|D)$ be a smooth pair with tropicalisation:
\[ \Sigma=\RR_{\geq 0}\langle \ell \rangle.\]
We consider the polyhedral subdivision of $\Sigma$ obtained by introducing two bounded edges with lengths $e_1,e_2$. This gives a tropical expansion with base cone:
\[ \tau=\RR^2_{\geq 0}\langle e_1 ,e_2 \rangle.\]
The height-$1$ slice of the conical subdivision $\Upsilon = (\Sigma \times \tau)^\dag$ is illustrated below, alongside the polyhedral subdivision $\p^{-1}(\f)$ for $\f \in |\tau|$ an interior point.

\begin{figure}[htb]
	\centering
	%\begin{minipage}[t]{0.5\textwidth}
		\centering
		\begin{tikzpicture} [scale=.70]
		\tikzstyle{every node}=[font=\normalsize]
		\tikzset{arrow/.style={latex-latex}}
		
		\tikzset{cross/.style={cross out, draw, thick,
         minimum size=2*(#1-\pgflinewidth), 
         inner sep=1.2pt, outer sep=1.2pt}}

% coordinate target
   \coordinate (Otarg) at (0,-3,0);
         \coordinate (v1targ) at (1.3,-3,0);
          \coordinate (v3targ) at (3.5,-3,0);
            \coordinate (y3targ) at (6,-3,0);

%coordinate subdivision
\coordinate (O) at (0,0,0);
	 \coordinate (B) at (6,0,0);
          \coordinate (C) at (3,5,0);
          \coordinate (v1) at (1.5,2.5,0);
           \coordinate (v2) at (4.5,2.5,0);
		
\begin{comment}		
%coordinates tropical curve

	\coordinate (O) at (0,0,0);
	 \coordinate (v1) at (1.3,1,0);
	\coordinate (v13) at (1.3,-.35,0);
          \coordinate (v2) at (3.5,1,0);
          \coordinate (x1) at (7,1.5,0);
           \coordinate (x2) at (7,1,0);
           \coordinate (x3) at (7,0.5,0);
            \coordinate (v3) at (3.5,-1,0);
            \coordinate (y2) at (7,-0.5,0);
           \coordinate (y3) at (7,-1,0);
   
%draw the tropical curve and target

%source		 
\foreach \x in {O,v1,v2,v3}
   \fill (\x) circle (3pt);
        	
    \node at (O) [left] {\tiny{$v_0$}};	
     \node at (v1) [above] {\tiny{$v_1$}};
     \node at (v2) [below] {\tiny{$v_2$}};
      \node at (v3) [below] {\tiny{$v_3$}};	
      	  \draw (v13)   node [cross] {};
	
	\draw (O)--(v1);
	\draw (v1)--(v2);
	\draw[->] (v1)--(x1);
	\draw (O)--(v3);
	\draw[->] (v2)--(x2);
	\draw[->] (v2)--(x3);
	\draw[->] (v3)--(y2);
	\draw[->] (v3)--(y3);
\end{comment}

%subdivision
	\coordinate (O) at (0,0,0);
	 \coordinate (B) at (6,0,0);
          \coordinate (C) at (3,5,0);
          \coordinate (v1) at (1.5,2.5,0);
           \coordinate (v2) at (4.5,2.5,0);
           
           \coordinate (lab1) at (3,0,0);
	 \coordinate (lab2) at (3,1.5,0);
	 	 \coordinate (lab3) at (3,2.5,0);

\draw[blue,thick] (v1)--(B);
\draw[blue,thick] (v1)--(v2);

\foreach \x in {O,B,C,v1,v2}
   \fill (\x) circle (2pt);
   
\draw[thick] (O)--(B);
\draw (B)--(C);
\draw (O)--(C);
\node at (O) [below] {\small{$e_1$}};	
     \node at (B) [below] {\small{$e_2$}};
     \node at (C) [above] {\small{$\ell$}};
     
     \node at (lab1) [below] {\small{$v_0$}};	
     \node at (lab2) [below] {\small{$v_1$}};
     \node at (lab3) [above] {\small{$v_2$}};

%target
\begin{scope}[every coordinate/.style={shift={(8,5,0)}}]

%edge lengths

\draw [decorate,decoration={brace,amplitude=5pt,mirror},xshift=0.4pt,yshift=-0.4pt] ([c]Otarg)--([c]v1targ) node[black,midway,xshift=0 cm,yshift=-0.4cm] {\small{$e_1$}};
\draw [decorate,decoration={brace,amplitude=5pt,mirror},xshift=0.4pt,yshift=-0.4pt] ([c]v1targ)--([c]v3targ) node[black,midway,xshift=0 cm,yshift=-0.4cm] {\small{$e_2$}};

\draw ([c]Otarg)--([c]v1targ);
\draw ([c]v1targ)--([c]v3targ);
\draw[->] ([c]v3targ)--([c]y3targ);
   \fill[blue] ([c]Otarg) circle (3pt);
       \fill[blue] ([c]v1targ) circle (3pt);
       \fill [blue]([c]v3targ) circle (3pt);
       \node at ([c]Otarg) [above] {\small{$v_0$}};	
     \node at ([c]v1targ) [above] {\small{$v_1$}};
     \node at ([c]v3targ) [above] {\small{$v_2$}};
         \node at ([c]y3targ) [right] {\small{$\ell$}};

\end{scope}

%tropical subdivision
\end{tikzpicture}
%	\end{minipage}
\caption{Rank one expansion with two tropical parameters.}
\label{rank1}
\end{figure}

\noindent This produces a family of expansions $\Xcal_{\Upsilon} \to \Aaff^{\!2}$ whose central fibre $Y_\Upsilon \subseteq \Xcal_{\Upsilon}$ has three irreducible components, namely $Y_{v_0} = X$ and $Y_{v_1} = Y_{v_2} = \PP_D(\mathcal O_D(D) \oplus\mathcal O_D)$. The rubber torus is coordinatised by the tropical parameters on the base:
\begin{equation*} T_\tau = \tau \otimes \Gm = \Gm^2 \langle e_1 ,e_2 \rangle.\end{equation*}
The tropical position maps record the positions of the vertices $v_i$ in terms of the edge lengths $e_j$, and are given by:
\begin{align*}
\varphi_{v_0}(e_1,e_2) & = 0,\\
\varphi_{v_1}(e_1,e_2) & = e_1, \\
\varphi_{v_2}(e_1,e_2) & = e_1+e_2.	
\end{align*}
\begin{comment}
\begin{align*} \varphi_{v_0} \colon & \tau = (\RR^2_+)_{e_1 e_2} \to 0 && = \sigma_{v_0}, \qquad (e_1,e_2) \mapsto 0, \\
\varphi_{v_1} \colon & \tau = (\RR^2_+)_{e_1 e_2} \to (\RR_+)_l && = \sigma_{v_1}, \qquad (e_1,e_2) \mapsto e_1,\\
\varphi_{v_2} \colon & \tau = (\RR^2_+)_{e_1 e_2} \to (\RR_+)_l && = \sigma_{v_2}, \qquad (e_1,e_2) \mapsto e_1+e_2.\end{align*}
\end{comment}
These induce homomorphisms of tori:
\begin{align*} \varphi_{v_0} \otimes \Gm & \colon \Gm^2 \langle e_1,e_2 \rangle \to 1, \quad \qquad \ (e_1,e_2) \mapsto 1, \\
\varphi_{v_1} \otimes \Gm & \colon \Gm^2 \langle e_1 , e_2 \rangle \to \Gm \langle \ell \rangle, \quad (e_1,e_2) \mapsto e_1, \\
\varphi_{v_2} \otimes \Gm & \colon \Gm^2 \langle e_1, e_2 \rangle \to \Gm \langle \ell \rangle, \quad (e_1,e_2) \mapsto e_1 e_2.	
\end{align*}
By Theorem~\ref{prop: rubber action general}, the rubber action is induced by these homomorphisms. The action is trivial on $Y_{v_0}$ and has weights $(1,0)$ and $(1,1)$ in the fibres of the $\PP^1$ bundles $Y_{v_1}$ and $Y_{v_2}$ respectively. Note in particular that the $1$-parameter subgroup $\Gm \langle e_1 \rangle \subseteq \Gm^2 \langle e_1 , e_2 \rangle$ acts nontrivially on both $Y_{v_1}$ and $Y_{v_2}$.

The action is trivial on the two join divisors. Following \S \ref{sec: locally closed strata toric case}, this may be seen by considering the generalised tropical position map $\varphi_P \colon \tau \to \theta_P$ where $P$ is one of the two bounded edges. We have $\theta_P=0$ and so the rubber action is trivial. An understanding of the global geometry of the irreducible components $Y_{v_i}$ is not required in order to describe the rubber action on the locally-closed strata.

The above description of the rubber action departs from the classical one, where the torus splits into one-dimensional factors which each act nontrivially on a single component of the target. We recover the classical description by changing coordinates on the rubber torus from $(e_1,e_2)$ to $(e_1,e_1e_2)$, thus ``diagonalising'' the action.

This change of coordinates is necessary in order to obtain a recursive description of the boundary of the moduli space of expanded stable maps. It is unclear whether there is a similar ``best choice'' of rubber torus coordinates for higher rank targets. On strata where the action is trivial on all join divisors, we expect the existence of suitable coordinates which split the torus and facilitate a recursive description of the boundary. However, as we will see in the next example, the rubber torus often acts nontrivially on the join divisors, precluding such a simple splitting formalism (see also the discussion in \S \ref{sec: recursive description of boundary}).

\end{example}

\begin{example}\label{ex:rank2asfan+actionondiv}
Consider a simple normal crossings pair $(X| D=D_1+D_2)$ with tropicalisation:
\[ \Sigma= \RR^2_{\geq 0} \langle \ell_1 , \ell_2 \rangle.\]
We consider the following tropical expansion of $\Sigma$ over the base cone $\tau = \RR_{\geq 0} \langle e \rangle$. As before, the conical subdivision is represented via its height-$1$ slice.

\begin{figure}[htb]
	\centering
%	\begin{minipage}[t]{0.5\textwidth}
		\centering
		\begin{tikzpicture} [scale=.7]
		\tikzstyle{every node}=[font=\normalsize]
		\tikzset{arrow/.style={latex-latex}}
		
		\tikzset{cross/.style={cross out, draw, thick,
         minimum size=2*(#1-\pgflinewidth), 
         inner sep=1.2pt, outer sep=1.2pt}}

%coordinate conical subdivision
\coordinate (O) at (0,0,0);
	 \coordinate (B) at (6,0,0);
	  \coordinate (Bhalf) at (3,0,0);
   \coordinate (C) at (3,5,0);
           \coordinate (v1) at (4.35,2.7,0);
          \coordinate (v2) at (3,1.9,0);
		\coordinate (v2Navid) at (3,2,0);

%coordinate polyhedral subdivision
\coordinate (OP) at (0,0,0);
\coordinate (BP) at (6,0,0);
   \coordinate (CP) at (0,6,0);
 \coordinate (v1P) at (0,3,0);
      \coordinate (v2P) at (3,3,0);
            \coordinate (v2Plab) at (3.3,3,0);
      \coordinate (vl1) at (3,6,0);
      \coordinate (vl2) at (6,3,0);
    \coordinate (vl3) at (5.5,5.5,0);

\coordinate (P01) at (0.17,1.5,0);
\coordinate (P02) at (1.35,1.7,0);
\coordinate (P12) at (1.5,2.9,0);

%conical sub
   
\draw (O)--(B);
\draw (B)--(C);
\draw (Bhalf)--(C);
\draw (O)--(C);
\draw(v1)--(O);
\draw(v2)--(B);

\node at (O) [below] {\small{$\ell_1$}};	
     \node at (B) [below] {\small{$\ell_2$}};
     \node at (C) [above] {\small{$e$}};
     
     \node at (C) [right] {\small{$v_0$}};	
     \node at (v1) [right] {\small{$v_1$}};
%     \node at (v2) [left] {\small{$v_2$}};
	\node at (v2Navid) [left] {\small{$v_2$}};

\foreach \x in {O,B, Bhalf}
   \fill (\x) circle (2pt);
   \foreach \x in {C,v1,v2}
   \fill[blue] (\x) circle (3pt);

%polyhedral sub

\begin{scope}[every coordinate/.style={shift={(10,0,0)}}]

%edge lengths

\draw [decorate,decoration={brace,amplitude=5pt},xshift=0.4pt,yshift=-0.4pt] ([c]v1P)--([c]OP) node[black,midway,xshift=0.3cm,yshift=0 cm] {\small{$e$}};
\draw [decorate,decoration={brace,amplitude=5pt},xshift=0.4pt,yshift=-0.4pt] ([c]v2P)--([c]v1P) node[black,midway,xshift=0 cm,yshift=-0.3cm] {\small{$e$}};
\draw [decorate,decoration={brace,amplitude=5pt,mirror},xshift=0.4pt,yshift=-0.1pt] ([c]OP)--([c]v2P) node[black,midway,xshift=0.25cm,yshift=-.2cm] {\small{$e$}};

\draw[->] ([c]OP)--([c]BP);
\draw [->]([c]OP)--([c]CP);

\draw[->] ([c]OP)--([c]vl3);
\draw[->] ([c]v2P)--([c]vl1);
\draw[->] ([c]v1P)--([c]vl2);

   \fill[blue]([c]OP) circle (3pt);
       \fill[blue] ([c]v1P) circle (3pt);
       \fill[blue] ([c]v2P) circle (3pt);
       
       \node at ([c]OP) [below] {\small{$v_0$}};	
     \node at ([c]v1P) [left] {\small{$v_1$}};
     \node at ([c]v2Plab) [below] {\small{$v_2$}};
     
      \node at ([c]BP) [right] {\small{$\ell_1$}};
          \node at ([c]CP) [above] {\small{$\ell_2$}};

	\node at ([c]P01) [left] {\tiny{$P_{01}$}};
	\node at ([c]P02) [rotate=45] {\tiny{$P_{02}$}};
	\node at ([c]P12) [above] {\tiny{$P_{12}$}};

\end{scope}

%tropical subdivision

\end{tikzpicture}
%	\end{minipage}
\caption{Nontrivial rubber action on join divisors.}
\label{actionondiv}
\end{figure}

\noindent This produces a family of expansions $\pi \colon \Xcal_{\Upsilon} \to \Aaff^{\!1}$ whose central fibre $Y_\Upsilon \subseteq \Xcal_{\Upsilon}$ has three irreducible components, indexed by the vertices $v_i$. The first is
\[ Y_{v_0} = \Bl_Z X\]
where $Z=D_1 \cap D_2$. This coincides with the general fibre of the family $\pi$, which is the logarithmic modification of $X$ induced by the asymptotic cone complex (we note however that $Y_{v_0}$ and the general fibre do not always coincide). Next we have
\[ Y_{v_1}=\PP_{D_2}(\OO_{D_2}(D_2)\oplus \OO_{D_2})\]
which is a $\PP^1$ bundle over the divisor $D_2$. Finally, $Y_{v_2}$ is the blowup of the $\PP^2$ bundle
\begin{equation*} \label{eqn: P2 bundle} \PP_{Z}(\OO_Z(D_1)\oplus \OO_Z(D_2) \oplus \OO_Z) \to Z \end{equation*}
along two sections of the projection, each of which gives a torus-fixed point when intersected with any fibre over $Z$. Alternatively, the toric variety bundle $Y_{v_2} \to Z$ can be constructed directly as a fibrewise GIT quotient. As explained in \cite{CarocciNabijou2} there are line bundles $L_1,\ldots,L_5$ on $Z$ corresponding to the edges adjacent to $v_2$ in the polyhedral complex, such that $Y_{v_2}$ is a GIT quotient:
\begin{equation*} Y_{v_2} = (L_1 \oplus \cdots \oplus L_5) \sslash \Gm^3.\end{equation*}
Labelling the edges anticlockwise starting from the ray with direction vector $(1,0)$, we can take:
\[  L_1=\OO_Z(D_1),L_3=\OO_Z(D_2),L_2=L_4=L_5=\OO_Z. \]
This furnishes $Y_{v_2}$ with homogeneous coordinates $[x_1,\ldots,x_5]$ relative to the base $Z$.

The join divisors are the strata $Y_{P_{01}},Y_{P_{02}},Y_{P_{12}}$. The first of these is isomorphic to $D_2$ while the other two are $\PP^1$ bundles over $Z$. Note that the inclusion $Y_{P_{02}} \subseteq Y_{v_0}$ is the exceptional divisor of the blowup, and that the inclusion $Y_{P_{12}} \subseteq Y_{v_1}$ is the restriction of the $\PP^1$ bundle $Y_{v_1} \to D_2$ to $Z\subseteq D_2$. %Note in particular that the fibrewise collapsing morphism $\rho \colon \Xcal_{\Upsilon} \to X$ is flat.

 The rubber torus is coordinatised by the single tropical parameter $e$:
 \[ T_\tau = \tau \otimes \Gm = \Gm \langle e \rangle. \]
In particular it is one-dimensional, even though the expanded target has three components. This occurs due to dependencies between the tropical edge lengths, forced by fixing the slopes.

 We will describe the rubber action on the locally-closed strata of the target. The action on $Y_{v_0}$ is trivial because $\sigma_{v_0}=0$, as is always the case for the level zero component. On the other hand, the projections
 \[ Y_{v_1}^\circ \to X_{v_1}^\circ = D_2 \setminus Z, \qquad Y_{v_2}^\circ \to X_{v_2}^\circ = Z\]
 are principal torus bundles of relative dimensions $1$ and $2$, respectively. By Theorem~\ref{prop: rubber action general} their structure groups are given by
 \begin{align*} T_{\sigma_1} & = \RR_{\geq 0}\langle \ell_2 \rangle \otimes \Gm = \Gm \langle \ell_2 \rangle, \\
T_{\sigma_2} & = \RR_{\geq 0}^2 \langle \ell_1,\ell_2 \rangle\otimes \Gm = \Gm^2 \langle \ell_1,\ell_2 \rangle
 \end{align*}
 while the tropical position maps are:
 \begin{align*}
 \varphi_{v_1} & \colon \RR_{\geq 0}\langle e \rangle \to \RR_{\geq 0} \langle \ell_2 \rangle, \qquad \ e \mapsto e,\\
 \varphi_{v_2} & \colon \RR_{\geq 0} \langle e \rangle \to \RR_{\geq 0}^2 \langle \ell_1,\ell_2 \rangle, \quad e \mapsto (e,e).
 \end{align*} 
 These determine the action of the rubber torus $ \Gm \langle e \rangle$: it acts on $Y_{v_1}^\circ$ and $Y_{v_2}^\circ$ simultaneously, via a uniform rescaling in the bundle directions. Note in particular that there is no way to split the rubber torus into factors corresponding to the irreducible components of the target.
  
This describes the rubber action on the interior of each component. By continuity, this determines the action on all locally-closed strata. The same principle can also be used to obtain global descriptions of the action on \emph{closed} strata. For instance, on $Y_{v_2}$ the action is given in terms of the homogeneous coordinates $[x_1,\ldots,x_5]$ as:
\[ \lambda \cdot [x_1,x_2,x_3,x_4,x_5] = [\lambda x_1,x_2,\lambda x_3,x_4,x_5]. \]
Again by continuity, we see that the action must fix the join divisor $Y_{P_{02}} \subseteq Y_{v_2}$ pointwise, while acting nontrivially on $Y_{P_{12}} \subseteq Y_{v_2}$. This is consistent with the description of the actions on $Y_{v_0}$ and $Y_{v_2}$, verifying that we indeed have a well-defined action on all of $Y_\Upsilon$.
 
 The action on the join divisors can be described more carefully in terms of generalised tropical position maps. The cones giving the structure groups for the associated principal torus bundles are:
 \begin{align*} (Y_{P_{01}}^\circ \to X_{P_{01}}^\circ = D_2\setminus Z) \colon & \ \ \theta_{P_{01}} = \RR_{\geq 0} \langle \ell_2 \rangle / \RR_{\geq 0} \langle \ell_2 \rangle = 0, \\
 (Y_{P_{02}}^\circ \to X_{P_{02}}^\circ = Z) \colon & \ \ \theta_{P_{02}} = \RR^2_{\geq 0} \langle \ell_1,\ell_2 \rangle / \RR_{\geq 0} \langle \ell_1+\ell_2 \rangle \cong \RR_{\geq 0}, \\
 (Y_{P_{12}}^\circ \to X_{P_{12}}^\circ = Z) \colon & \ \ \theta_{P_{12}} = \RR^2_{\geq 0} \langle \ell_1,\ell_2 \rangle/\RR_{\geq 0} \langle \ell_1 \rangle = \RR_{\geq 0} \langle \ell_2 \rangle.
 \end{align*}
However, the tropical position map $\varphi_{P_{02}} \colon \RR_{\geq 0} \langle e \rangle \to \RR_{\geq 0}$ is zero. This is because the generalised position of $P_{02}$ is obtained by projecting $P_{02}$ away from the diagonal line in $\RR^2_{\geq 0} \langle \ell_1,\ell_2 \rangle$, which always gives $0$ regardless of the choice of $e$. On the other hand the tropical position map for $P_{12}$ is:
\begin{equation*} \varphi_{P_{12}} \colon \RR_{\geq 0} \langle e \rangle \to \RR_{\geq 0} \langle \ell_2 \rangle, \quad e \to e. \end{equation*}
We conclude that the actions on $Y_{P_{01}}$ and $Y_{P_{02}}$ are trivial, while the action on $Y_{P_{12}}$ is given by rescaling the fibres of the $\PP^1$ bundle $Y_{P_{12}} \to Z$.
  
We see in particular that the rubber torus acts nontrivially on a join divisor. As discussed in \S\ref{sec: recursive description of boundary}, this is the main obstruction to obtaining a recursive description of the boundary of the space of expanded stable maps
\end{example}

\begin{example} \label{example: third rubber example} Consider once again a pair $(X| D=D_1+D_2)$ with tropicalisation:
\[ \Sigma = \RR^2_{\geq 0} \langle \ell_1,\ell_2 \rangle. \]
We consider the following tropical expansion, with two-dimensional base cone $\tau = \RR^2_{\geq 0} \langle e_1,e_2 \rangle$:

\begin{figure}[htb]
	\centering
	%\begin{minipage}[t]{0.5\textwidth}
		\centering
		\begin{tikzpicture} [scale=.6]
		\tikzstyle{every node}=[font=\normalsize]
		\tikzset{arrow/.style={latex-latex}}
		
		\tikzset{cross/.style={cross out, draw, thick,
         minimum size=2*(#1-\pgflinewidth), 
         inner sep=1.2pt, outer sep=1.2pt}}

%coordinate conical subdivision
\coordinate (O) at (0,0,-2);
 \coordinate (B) at (7,0,0);
 \coordinate(e2l2) at (1.35,3.8,0);
   \coordinate (C) at (2,7,0);
      \coordinate (A) at (1,0,8);
     		
 \coordinate(e1l1) at (3.9,0,4);
  \coordinate(e1l2) at (3.5,0,-1);
   
  \coordinate(e1l1l2) at (3,.4,3);

\begin{scope}[every coordinate/.style={shift={(0,0,0)}}]
          
\draw[blue]([c]e2l2)--([c]B);
\draw[blue]([c]e2l2)--([c]e1l1l2);
   
\draw[blue] ([c]B)--([c]C);
\draw[blue]([c]e1l1l2)--([c]C);

\draw ([c]O)--([c]B);
\draw ([c]O)--([c]C);
\draw([c]A)--([c]O);
\draw([c]A)--([c]B);
\draw([c]A)--([c]C);

\draw[dashed]([c]e1l2)--([c]e1l1);
\draw[dashed]([c]e1l2)--([c]C);
\draw[dashed]([c]e1l2)--([c]A);

\draw[dashed,gray]([c]e1l1)--([c]O);
\draw[dashed,gray]([c]e1l1)--([c]C);
%\draw[blue]([c]e2l2)--([c]A);
\draw[dashed,gray]([c]e1l1l2)--([c]B);
%\draw[blue]([c]e2l2)--([c]e1l2);
%\draw[dashed]([c]e1l1)--([c]O);

%conical sub
\foreach \x in {O,B, C,A,e2l2,e1l2,e1l1,e1l1l2}
   \fill ([c]\x) circle (2pt);

\node at ([c]O) [below] {\small{$\ell_2$}};	
     \node at ([c]B) [below] {\small{$e_1$}};
     \node at ([c]C) [above] {\small{$e_2$}};
          \node at ([c]A) [left] {\small{$\ell_1$}};
     
     \node at ([c]C) [xshift=1.55cm,yshift=-1.8cm] {\small{$v_0$}};	
       \node at ([c]C) [xshift=1.2cm,yshift=-3.2cm] {\small{$v_1$}};
               \node at ([c]O) [xshift=.3cm,yshift=.5cm] {\small{$v_3$}};		
              \node at ([c]O) [xshift=0.9cm,yshift=2.4cm] {\small{$v_2$}};	

 %    \node at ([c]v1) [below] {\tiny{$v_1$}};
     %\node at (v2) [left] {\tiny{$v_2$}};

\end{scope}

%polyhedral sub

%coordinate polyhedral subdivision
\coordinate (OP) at (0,0,0);
\coordinate (BP) at (7,0,0);
   \coordinate (CP) at (0,7,0);
 \coordinate (v1P) at (0,2,0);
      \coordinate (v2P) at (3,3,0);
            \coordinate (v3P) at (3,5,0);
      \coordinate (vl2) at (7,3,0);
      \coordinate (vl31) at (7,5,0);
    \coordinate (vl32) at (3,7,0);

\begin{scope}[every coordinate/.style={shift={(11,0,0)}}]

%edge lenghts

\draw [decorate,decoration={brace,amplitude=5pt,mirror},xshift=0.4pt,yshift=-0.4pt] ([c]v1P)--([c]OP) node[black,midway,xshift=-0.4cm,yshift=0 cm] {\small{$e_2$}};
\draw [decorate,decoration={brace,amplitude=5pt,mirror},xshift=0.4pt,yshift=0.4pt] ([c]OP)--([c]v2P) node[black,midway,xshift=0.3cm,yshift=-0.3cm] {\small{$e_1$}};

\draw[->] ([c]OP)--([c]BP);
\draw [->]([c]OP)--([c]CP);

\draw ([c]OP)--([c]v2P);
\draw ([c]v2P)--([c]v3P);
\draw ([c]v1P)--([c]v3P);
\draw [->]([c]v2P)--([c]vl2);
\draw [->]([c]v3P)--([c]vl31);
\draw [->]([c]v3P)--([c]vl32);

   \fill[blue]([c]OP) circle (3pt);
       \fill[blue] ([c]v1P) circle (3pt);
       \fill[blue] ([c]v2P) circle (3pt);
          \fill[blue] ([c]v3P) circle (3pt);
       
       \node at ([c]OP) [below] {\small{$v_0$}};	
     \node at ([c]v1P) [left] {\small{$v_1$}};
     \node at ([c]v2P) [xshift=.2cm, yshift=-0.2cm] {\small{$v_2$}};
          \node at ([c]v3P) [xshift=.2cm, yshift=0.2cm] {\small{$v_3$}};

      \node at ([c]BP) [right] {\small{$\ell_1$}};
          \node at ([c]CP) [above] {\small{$\ell_2$}};

\end{scope}

\end{tikzpicture}
%	\end{minipage}
\caption{Rank two target and base.}
\label{fig:rank2rubber2} 
\end{figure}

\noindent The polyhedral subdivision has two independent tropical parameters $e_1,e_2$. These coordinatise the base cone $\tau$ and the rubber torus:
\[ T_\tau = \Gm^2 \langle e_1,e_2 \rangle. \]
As before let $Z=D_1 \cap D_2$. The structure groups of the principal torus bundles $Y_{v_i}^\circ \to X_{v_i}^\circ$ are:
\begin{align*} 
(Y_{v_1}^\circ \to X_{v_1}^\circ = D_2\setminus Z) \colon \ \ & T_{\sigma_1} = \Gm \langle \ell_2 \rangle, \\
(Y_{v_2}^\circ \to X_{v_2}^\circ = Z) \colon \ \ & T_{\sigma_2} = \Gm^2 \langle \ell_1,\ell_2 \rangle, \\
(Y_{v_3}^\circ \to X_{v_3}^\circ = Z) \colon \ \ & T_{\sigma_3} = \Gm^2 \langle \ell_1,\ell_2 \rangle.
\end{align*}
The tropical position maps are given by:
\begin{align*}
\varphi_{v_1}(e_1,e_2) & = e_2, \\
\varphi_{v_2}(e_1,e_2) & = (e_1,e_1), \\
\varphi_{v_3}(e_1,e_2) & = (e_1,e_1+e_2).	
\end{align*}
These induce homomorphisms from the rubber torus to the structure groups of the principal torus bundles $Y_{v_i}^\circ \to X_{v_i}^\circ$. These determine the rubber action on the interior of each component:
\begin{align*}
\varphi_{v_1} \otimes \Gm & \colon \Gm^2 \langle e_1,e_2 \rangle \to \Gm \langle \ell_2 \rangle, \qquad \ (e_1,e_2) \mapsto e_2,\\
\varphi_{v_2} \otimes \Gm & \colon \Gm^2 \langle e_1,e_2 \rangle \to \Gm^2 \langle \ell_1,\ell_2 \rangle, \quad (e_1,e_2) \mapsto (e_1,e_1), \\
\varphi_{v_3} \otimes \Gm & \colon \Gm^2 \langle e_1,e_2 \rangle \to \Gm^2 \langle \ell_1,\ell_2 \rangle, \quad (e_1,e_2) \mapsto (e_1,e_1 e_2).
\end{align*}
The rubber action is nontrivial on the join divisors $Y_{v_1}\cap Y_{v_3}$ and $Y_{v_2} \cap Y_{v_3}$ as can be seen by considering generalised tropical position maps.
\end{example}

\begin{example} Consider the following modification of the previous example

\begin{figure}[H]
\centering
\begin{tikzpicture} [scale=.6]
	%polyhedral sub

%coordinate polyhedral subdivision
\coordinate (OP) at (0,0,0);
\coordinate (BP) at (7,0,0);
   \coordinate (CP) at (0,7,0);
 \coordinate (v1P) at (0,3,0);
      \coordinate (v2P) at (3,3,0);
            \coordinate (v3P) at (3,6,0);
      \coordinate (vl2) at (7,3,0);
      \coordinate (vl31) at (7,6,0);
    \coordinate (vl32) at (3,7,0);
    \coordinate (Pnew) at (1.5,2.8,0);

\begin{scope}[every coordinate/.style={shift={(11,0,0)}}]

%edge lenghts

\draw [decorate,decoration={brace,amplitude=5pt,mirror},xshift=0.4pt,yshift=-0.4pt] ([c]v1P)--([c]OP) node[black,midway,xshift=-0.4cm,yshift=0 cm] {\small{$e$}};
\draw [decorate,decoration={brace,amplitude=5pt,mirror},xshift=0.4pt,yshift=0.4pt] ([c]OP)--([c]v2P) node[black,midway,xshift=0.3cm,yshift=-0.3cm] {\small{$e$}};

\draw[->] ([c]OP)--([c]BP);
\draw [->]([c]OP)--([c]CP);

\draw ([c]OP)--([c]v2P);
\draw ([c]v2P)--([c]v3P);
\draw ([c]v1P)--([c]v3P);
\draw ([c]v3P)--([c]OP);
\draw [->]([c]v2P)--([c]vl2);
\draw [->]([c]v3P)--([c]vl31);
\draw [->]([c]v3P)--([c]vl32);

   \fill[blue]([c]OP) circle (3pt);
       \fill[blue] ([c]v1P) circle (3pt);
       \fill[blue] ([c]v2P) circle (3pt);
          \fill[blue] ([c]v3P) circle (3pt);
       
       \node at ([c]OP) [below] {\small{$v_0$}};	
     \node at ([c]v1P) [left] {\small{$v_1$}};
     \node at ([c]v2P) [xshift=.2cm, yshift=-0.2cm] {\small{$v_2$}};
          \node at ([c]v3P) [xshift=.2cm, yshift=0.2cm] {\small{$v_3$}};
          
          \node at ([c]Pnew) [left] {\small{$P$}};

      \node at ([c]BP) [right] {\small{$\ell_1$}};
          \node at ([c]CP) [above] {\small{$\ell_2$}};

\end{scope}
\end{tikzpicture}
\caption{Further subdivision changes the rubber torus.}
\end{figure}
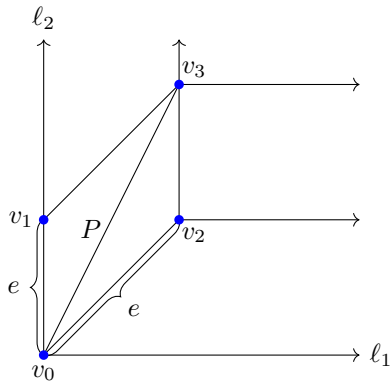
\noindent where we have introduced an additional diagonal edge $P$ with slope $(1,2)$. This forces an identification of the previously independent tropical parameters:
\begin{equation*} e = e_1 = e_2. \end{equation*}
Consequently the base cone, and hence the rubber torus, changes as well, from $\Gm^2\langle e_1, e_2 \rangle$ to the diagonal subtorus:
\[ \Gm \langle e \rangle \hookrightarrow \Gm^2 \langle e_1,e_2 \rangle.\]
The new rubber action arises by composing this inclusion with the previous action maps $\varphi_{v_i} \otimes \Gm$. We therefore see that refining a polyhedral subdivision induces a reduction of the rubber torus. This is explained by the fact that $\Gm \langle e \rangle$ is precisely the subtorus of $\Gm^2 \langle e_1,e_2 \rangle$ which acts trivially on the new join divisor $Y_P$.
\end{example}

\begin{example} The previous examples all concern tropical expansions which are complete. As discussed in Remark~\ref{rmk: T not complete}, in enumerative geometry it is often more convenient to work with non-complete expansions. Typically we are interested only in the image of a given tropical curve, which amounts to considering expansions built exclusively from $0$- and $1$-dimensional polyhedra. Below is a variant of Example~\ref{example: third rubber example} demonstrating an instance of a non-complete expansion. 

\begin{figure}[H]
	\centering
	%\begin{minipage}[t]{0.5\textwidth}
		\centering
		\begin{tikzpicture} [scale=.6]
		\tikzstyle{every node}=[font=\normalsize]
		\tikzset{arrow/.style={latex-latex}}
		
		\tikzset{cross/.style={cross out, draw, thick,
         minimum size=2*(#1-\pgflinewidth), 
         inner sep=1.2pt, outer sep=1.2pt}}

%coordinate polyhedral subdivision
\coordinate (OP) at (0,0,0);
\coordinate (BP) at (7,0,0);
   \coordinate (CP) at (0,7,0);
 
      \coordinate (v2P) at (3,3,0);
            \coordinate (v3P) at (3,5,0);
      \coordinate (vl2) at (7,3,0);
      \coordinate (vl31) at (7,5,0);
    \coordinate (vl32) at (3,7,0);

\begin{scope}[every coordinate/.style={shift={(0,0,0)}}]

% Shaded regions
\fill [orange!20] (0.2,0.4) -- (2.8,3) -- (2.8,7) -- (0.2,7) -- (0.2,0.4);
\fill [orange!20] (0.4,0.2) -- (7,0.2) -- (7,2.8) -- (3,2.8) -- (0.4,0.2);
\fill [orange!20] (3.2,3.2) -- (7,3.2) -- (7,4.8) -- (3.2,4.8) -- (3.2,3.2);
\fill [orange!20] (3.2,5.2) -- (7,5.2) -- (7,7) -- (3.2,7) -- (3.2,5.2);

%edge lenghts

\draw [decorate,decoration={brace,amplitude=5pt,mirror},xshift=0.4pt,yshift=-0.4pt] ([c]v2P)--([c]v3P) node[black,midway,xshift=0.4cm,yshift=0 cm] {\small{$e_2$}};
\draw [decorate,decoration={brace,amplitude=5pt,mirror},xshift=0.4pt,yshift=0.4pt] ([c]OP)--([c]v2P) node[black,midway,xshift=0.3cm,yshift=-0.3cm] {\small{$e_1$}};
\draw[->] ([c]OP)--([c]BP);
\draw [->]([c]OP)--([c]CP);

\draw ([c]OP)--([c]v2P);
\draw ([c]v2P)--([c]v3P);

\draw [->]([c]v2P)--([c]vl2);
\draw [->]([c]v3P)--([c]vl31);
\draw [->]([c]v3P)--([c]vl32);

   \fill[blue]([c]OP) circle (3pt);

       \fill[blue] ([c]v2P) circle (3pt);
          \fill[blue] ([c]v3P) circle (3pt);
       
       \node at ([c]OP) [below] {\small{$v_0$}};	
     \node at ([c]v2P) [xshift=.2cm, yshift=-0.2cm] {\small{$v_2$}};
          \node at ([c]v3P) [xshift=.2cm, yshift=0.175cm] {\small{$v_3$}};

      \node at ([c]BP) [right] {\small{$\ell_1$}};
          \node at ([c]CP) [above] {\small{$\ell_2$}};

\end{scope}
	\end{tikzpicture}

%	\end{minipage}
\caption{Non-complete expansion.}
\label{fig:noncompact} 
\end{figure}

\noindent The orange shaded regions are not included in the conical or polyhedral complex. Note that there is no canonical choice of compactification here. The rubber torus
\[ T_\tau = \Gm^2 \langle e_1, e_2 \rangle \]
is the same as before, as is the description of its action on $Y_{v_2}^\circ$ and $Y_{v_3}^\circ$. We see that complete and non-complete expansions can be handled uniformly.
\end{example}

\footnotesize
\bibliographystyle{alpha}
\bibliography{Bibliography.bib}\medskip

\newcommand{\etalchar}[1]{$^{#1}$}
\begin{thebibliography}{KKMSD73}

\bibitem[AC14]{AbramovichChenLog}
D.~Abramovich and Q.~Chen.
\newblock Stable logarithmic maps to {D}eligne-{F}altings pairs {II}.
\newblock {\em Asian J. Math.}, 18(3):465--488, 2014.

\bibitem[ACGS20]{PuncturedMaps}
D.~{Abramovich}, Q.~{Chen}, M.~{Gross}, and B.~{Siebert}.
\newblock {Punctured logarithmic maps}.
\newblock September 2020.
\newblock arXiv:2009.07720.

\bibitem[ACM{\etalchar{+}}16]{AbramovichEtAlSkeletons}
D.~Abramovich, Q.~Chen, S.~Marcus, M.~Ulirsch, and J.~Wise.
\newblock Skeletons and fans of logarithmic structures.
\newblock In {\em Nonarchimedean and tropical geometry}, Simons Symp., pages
  287--336. Springer, 2016.

\bibitem[ACP15]{ACP}
D.~Abramovich, L.~Caporaso, and S.~Payne.
\newblock The tropicalization of the moduli space of curves.
\newblock {\em Ann. Sci. \'{E}c. Norm. Sup\'{e}r. (4)}, 48(4):765--809, 2015.

\bibitem[AG22]{ArguzGross}
H.~Arg\"{u}z and M.~Gross.
\newblock The higher-dimensional tropical vertex.
\newblock {\em Geom. Topol.}, 26(5):2135--2235, 2022.

\bibitem[AK00]{AbramovichKaru}
D.~Abramovich and K.~Karu.
\newblock Weak semistable reduction in characteristic 0.
\newblock {\em Invent. Math.}, 139(2):241--273, 2000.

\bibitem[AMW14]{AbramovichMarcusWiseComparison}
D.~Abramovich, S.~Marcus, and J.~Wise.
\newblock Comparison theorems for {G}romov-{W}itten invariants of smooth pairs
  and of degenerations.
\newblock {\em Ann. Inst. Fourier (Grenoble)}, 64(4):1611--1667, 2014.

\bibitem[BBvG20]{BBvG2}
P.~Bousseau, A.~Brini, and M.~van Garrel.
\newblock {Stable maps to Looijenga pairs}.
\newblock November 2020.
\newblock arXiv:2011.08830. \emph{Geom. Topol.}, to appear.

\bibitem[BM96]{BehrendManin}
K.~Behrend and Yu. Manin.
\newblock Stacks of stable maps and {G}romov-{W}itten invariants.
\newblock {\em Duke Math. J.}, 85(1):1--60, 1996.

\bibitem[Bou23]{BousseauTakahashi}
P.~Bousseau.
\newblock A proof of {N}. {T}akahashi's conjecture for $(\mathbb{P}^2,{E})$ and
  a refined sheaves/{G}romov--{W}itten correspondence.
\newblock {\em Duke Math. J.}, 172(15):2895--2955, 2023.

\bibitem[CCUW20]{CavalieriChanUlirschWise}
R.~Cavalieri, M.~Chan, M.~Ulirsch, and J.~Wise.
\newblock A moduli stack of tropical curves.
\newblock {\em Forum Math. Sigma}, 8:Paper No. e23, 93, 2020.

\bibitem[Che14]{ChenLog}
Q.~Chen.
\newblock Stable logarithmic maps to {D}eligne-{F}altings pairs {I}.
\newblock {\em Ann. of Math. (2)}, 180(2):455--521, 2014.

\bibitem[CN22]{CarocciNabijou2}
F.~{Carocci} and N.~{Nabijou}.
\newblock {Tropical expansions and toric variety bundles}.
\newblock July 2022.
\newblock arXiv:2207.12541.

\bibitem[FR16]{FosterRanganathan}
T.~Foster and D.~Ranganathan.
\newblock Degenerations of toric varieties over valuation rings.
\newblock {\em Bull. Lond. Math. Soc.}, 48(5):835--847, 2016.

\bibitem[GPS10]{GPS}
M.~Gross, R.~Pandharipande, and B.~Siebert.
\newblock The tropical vertex.
\newblock {\em Duke Math. J.}, 153(2):297--362, 2010.

\bibitem[Gra22]{Graefnitz}
T.~Graefnitz.
\newblock Tropical correspondence for smooth del {P}ezzo log {C}alabi-{Y}au
  pairs.
\newblock {\em J. Algebraic Geom.}, 31(4):687--749, 2022.

\bibitem[GS13]{GrossSiebertLog}
M.~Gross and B.~Siebert.
\newblock Logarithmic {G}romov-{W}itten invariants.
\newblock {\em J. Amer. Math. Soc.}, 26(2):451--510, 2013.

\bibitem[HPS19]{HPS}
D.~Holmes, A.~Pixton, and J.~Schmitt.
\newblock Multiplicativity of the double ramification cycle.
\newblock {\em Doc. Math.}, 24:545--562, 2019.

\bibitem[HS22]{HolmesSchwarz}
D.~Holmes and R.~Schwarz.
\newblock Logarithmic intersections of double ramification cycles.
\newblock {\em Algebr. Geom.}, 9(5):574--605, 2022.

\bibitem[Kim10]{KimLog}
B.~Kim.
\newblock Logarithmic stable maps.
\newblock In {\em New developments in algebraic geometry, integrable systems
  and mirror symmetry ({RIMS}, {K}yoto, 2008)}, volume~59 of {\em Adv. Stud.
  Pure Math.}, pages 167--200. Math. Soc. Japan, Tokyo, 2010.

\bibitem[KKMSD73]{KKMS}
G.~Kempf, F.~F. Knudsen, D.~Mumford, and B.~Saint-Donat.
\newblock {\em Toroidal embeddings. {I}}.
\newblock Lecture Notes in Mathematics, Vol. 339. Springer-Verlag, Berlin-New
  York, 1973.

\bibitem[Li01]{Li1}
J.~Li.
\newblock Stable morphisms to singular schemes and relative stable morphisms.
\newblock {\em J. Differential Geom.}, 57(3):509--578, 2001.

\bibitem[Li02]{Li2}
J.~Li.
\newblock A degeneration formula of {GW}-invariants.
\newblock {\em J. Differential Geom.}, 60(2):199--293, 2002.

\bibitem[Mik05]{MikhalkinTropicalCorrespondence}
G.~Mikhalkin.
\newblock Enumerative tropical algebraic geometry in {$\Bbb R^2$}.
\newblock {\em J. Amer. Math. Soc.}, 18(2):313--377, 2005.

\bibitem[Mol21]{Molcho}
S.~Molcho.
\newblock Universal stacky semistable reduction.
\newblock {\em Isr. J. Math.}, 2021.

\bibitem[MP06]{MaulikPandharipande}
D.~Maulik and R.~Pandharipande.
\newblock A topological view of {G}romov-{W}itten theory.
\newblock {\em Topology}, 45(5):887--918, 2006.

\bibitem[MR20a]{MandelRuddat}
T.~Mandel and H.~Ruddat.
\newblock Descendant log {G}romov-{W}itten invariants for toric varieties and
  tropical curves.
\newblock {\em Trans. Amer. Math. Soc.}, 373(2):1109--1152, 2020.

\bibitem[MR20b]{MR20}
D.~{Maulik} and D.~{Ranganathan}.
\newblock {Logarithmic Donaldson-Thomas theory}.
\newblock June 2020.
\newblock arXiv:2006.06603.

\bibitem[MR21]{MolchoRanganathan}
S.~{Molcho} and D.~{Ranganathan}.
\newblock {A case study of intersections on blowups of the moduli of curves}.
\newblock June 2021.
\newblock arXiv:2106.15194. \emph{Alg. Num. Th.}, to appear.

\bibitem[Mum72]{MumfordAbelian}
D.~Mumford.
\newblock An analytic construction of degenerating abelian varieties over
  complete rings.
\newblock {\em Compositio Math.}, 24:239--272, 1972.

\bibitem[MW20]{MarcusWise}
S.~Marcus and J.~Wise.
\newblock Logarithmic compactification of the {A}bel-{J}acobi section.
\newblock {\em Proc. Lond. Math. Soc. (3)}, 121(5):1207--1250, 2020.

\bibitem[NS06]{NishinouSiebert}
T.~Nishinou and B.~Siebert.
\newblock Toric degenerations of toric varieties and tropical curves.
\newblock {\em Duke Math. J.}, 135(1):1--51, 2006.

\bibitem[{Ran}19]{DhruvProduct}
D.~{Ranganathan}.
\newblock {A note on cycles of curves in a product of pairs}.
\newblock October 2019.
\newblock arXiv:1910.00239.

\bibitem[Ran22]{RangExpansions}
D.~Ranganathan.
\newblock Logarithmic {G}romov-{W}itten theory with expansions.
\newblock {\em Algebr. Geom.}, 9(6):714--761, 2022.

\end{thebibliography}

\noindent Francesca Carocci: University of Geneva, Mathematics Department, Rue du Conseil-Général 7-9, CH-1204 Genève, Switzerland. Email: \href{mailto:francesca.carocci@unige.ch}{francesca.carocci@unige.ch}

\noindent Navid Nabijou: School of Mathematical Sciences, Queen Mary University of London, London E1 4NS, United Kingdom. Email: \href{mailto:n.nabijou@qmul.ac.uk}{n.nabijou@qmul.ac.uk}

\end{document}